\newtheorem{lemma}{Lemma}
\newtheorem{definition}{Definition}
\newtheorem{example}{Example}
\definecolor{gris0}{gray}{0.98}
\definecolor{gris1}{gray}{0.93}
\definecolor{gris2}{gray}{0.84}
\definecolor{gris3}{gray}{0.75}
\definecolor{gris4}{gray}{0.69}
\definecolor{gris5}{gray}{0.63}
\definecolor{gris6}{gray}{0.57}
\begin{document}

\begin{frontmatter}

\title{Condition-based maintenance for a system subject to multiple degradation processes with stochastic arrival intensity}

%% Group authors per affiliation:

\author{Lucía Bautista} 
\address{Department of Mathematics \\ University of Extremadura, Cáceres (Spain) \\ email: luciabb@unex.es}

\author{Inma T. Castro*}
\address{Department of Mathematics \\ University of Extremadura, Cáceres (Spain) \\ email: inmatorres@unex.es}

\author{Luis Landesa} 
\address{Department of Computers and Communications Technology \\ University of Extremadura, Cáceres (Spain) \\ email: llandesa@unex.es}

%\fntext[myfootnote1]{Escuela Politécnica (Cáceres)}
%\fntext[myfootnote2]{Facultad de Ciencias del Deporte (Cáceres)}

%% or include affiliations in footnotes:
%\author[mymainaddress,mysecondaryaddress]{Elsevier Inc}
%\ead[url]{www.elsevier.com}

%\author[mysecondaryaddress]{\corref{mycorrespondingauthor}}

\cortext[mycorrespondingauthor]{Corresponding author}
%\ead{luciabb@unex.es}

%\address[mymainaddress]{1600 John F Kennedy Boulevard, Philadelphia}
%\address[mysecondaryaddress]{360 Park Avenue South, New York}

\begin{abstract}

In this work, a system subject to different deterioration processes is analysed. The arrival of the degradation processes to the system is modelled using a shot-noise
Cox process. The degradation processes grow according to an homogeneous gamma process. The system fails when a degradation process exceeds a failure threshold. The
combined process of initiation and growth of the degradation processes is modelled and the system reliability is obtained. Heterogeneities are {\color{black} also} integrated in the model assuming that the {\color{black} inverse of the} scale parameter follows a uniform distribution. 
{\color{black} A maintenance strategy is implemented in this system and the state of the system is checked in inspection times}. If the system is {\color{black} working at inspection time}, a preventive replacement is performed if the deterioration level of a degradation process exceeds a certain threshold. A corrective replacement is performed if the system {\color{black} is down at inspection time}. Under this maintenance strategy, the expected cost rate is obtained. {\color{black} Sensitivity} analysis on the main parameters of the gamma process is performed.

\end{abstract}
\begin{keyword}
condition-based maintenance, gamma process, heterogeneities, shot-noise Cox process, complex degradation processes.
\end{keyword}

\end{frontmatter}

%\linenumbers

\section{Introduction}
It is well known that Condition-Based Maintenance (CBM) is a maintenance strategy based on monitoring the operating condition of a system. Compared to time-based maintenance, and thanks to the development of sensor technologies, CBM usually results in lower maintenance costs, avoiding unnecessary preventive maintenance activities and reducing unexpected failures \cite{Huynh}. However, as Alaswad and Xiang \cite{Alaswad} claim, further research on CBM is still in great need. Since industrial systems are becoming
{\color{black} increasingly complex} and they are likely to suffer from multiple degradation processes, further research is needed in modelling the maintenance when a system possesses more than one degradation path. 
Systems subject to multiple degradation processes can be found in electronic products, heavy machine tools and piping systems \cite{Liu2020}. On a pavement network, several different degradation processes, such as fatigue cracking and pavement deformation, may develop simultaneously \cite{Wu}. 

In literature, significant and meaningful prior research has been done on reliability and maintenance policies for systems subject to multiple degradation processes. An earlier work on this topic 
is \cite{Crk}, where a multiple multivariate regression is applied to model the complexity of the processes. {\color{black} Some studies}  model the joint probability assuming that the degradation processes are independent (\cite{Castro}, \cite{Caballe}, \cite{Bordes}). It could be the case of components in different production units without interference. But, in some cases, interaction between adjacent processes has a significant influence on the propagation characteristics of the rest of the processes. In these cases, the dependence between degradation processes has to be taken into account. Different works that deal with dependent degradation processes can be found in \cite{Wang} and \cite{Castro2}.

%Some models of dependence has been obtained using the copula method. A copula is a function that couples marginal distributions to their multivariate distribution functions. Using copula, the modelling of the marginal data and the multivariate dependency can be
%separated. For example, in \cite{Wang}, \cite{Liu}, copulas are used to model the dependence. A multivariate stochastic process where the covariance matrix is employed to describe the interactions among the different degradation processes is used in \cite{Liu2}. Some studies adopt multivariate path model . Another marginal approach to model the dependence is consider that the degradation rate of the degradation processes is influenced by the arrival of different processes to the system \cite{Castro}. 

In general, for complex degradation processes, there are also two approaches to model the degradation mechanism of the processes. The first approach considers that all the processes start to degrade at the same time (\cite{Jia}). However, as Kuniewski {\it et. al} claim \cite{Kuniewski}, it is unlikely that all processes appear at the same time. The second approach assumes that the processes initiate at random times and then grow depending on the environment and conditions of the system. In this approach, two stochastic processes have to be combined: the initiation process and the growth process. Let $\left\{N^*(t), t \geq 0\right\}$ be the point process of the degradation processes with arrival times $S_1<S_2< \ldots$ (``initiating events''). Each degradation process triggers the ``effective event'' of failure after a random time $D_i$, where $D_i$ are i.i.d. non-negative variables. The sequence $\left\{S_i+D_i\right\}$ forms a new point process. The initiating events can often be interpreted as potentially harmful events affecting a system.  The  ``wear process''
triggered is activated at the moment of the occurrence of the harmful event and continuously
increases with time \cite{Cha2012}.  This approach has been recently dealt by Cha and Filkenstein \cite{Cha2018b} assuming that the initiation process follows a generalized Polya process. They obtained the survival function of the system and an analysis of the residual lifetime was also performed.

This idea of combining the initiation process and the growth process is adopted in this paper. Some of the works that use this second approach assume that the degradation processes arrive either at a constant rate (using the Poisson process \cite{Huynh2017}) or governed by a deterministic intensity function (using the non-homogeneous Poisson process \cite{Caballe}). If the degradation processes appear following a non-homogeneous Poisson process (NHPP) and all the degradation processes degrade following the same degradation mechanism, the combined point process $\left\{S_i+D_i\right\}$ follows a non-homogeneous Poisson process. The non-homogeneous Poisson assumption facilitates explicit analysis and it has been applied to real data in order to model the corroding gas pipeline system over its lifetime \cite{Tee}. However, modelling the arrival process as a non-homogeneous Poisson process assumes that the initiation intensity is deterministic along the system lifetime.  {\color{black} The motivation of our work is the following: external shocks {\color{black} can accelerate the arrival of new degradation processes to the system hence the deterministic intensity of arrivals can not be settled. For example, in harsh environments, shocks can induce highly dynamic loads on structures causing cracking problems \cite{Huang}. In the case of cermets, many cracks are initiated due to mechanical shocks \cite{Ishiara}}}.  When shocks affect to the intensity of arrival of new degradation processes, this phenomenon is better captured by a shot noise Cox process instead of a non-homogeneous Poisson process. The {\color{black} shot noise} Cox process has been employed as a useful tool for modeling the impact on the system lifetime of a dynamic environment \cite{Cha2017}, \cite{Cha2018}, \cite{Cha2018c}. In this paper, we assume that the degradation processes initiate at random times under a shot noise Cox process. From its introduction in reliability by Lemoine and Wenocur \cite{Lemoine1} and \cite{Lemoine2}, different authors have used this process in reliability and maintenance mainly for modeling the increment of the failure rate or an abrupt degradation increment due to external shocks (see \cite{Finkelstein2020} and \cite{Qiu} as recent examples of the use of shot noise process in maintenance and reliability).

Sometimes, the degradation processes present substantial variations between them causing different degradation patterns. Different models have been proposed to integrate the heterogeneities under the common idea that some parameters are process-specific and different across processes \cite{ChenEJOR}. An approach to take into account the heterogeneities is to assume that the parameters of the model follow a random variable \cite{Chen}. In the case of a gamma process, the {\color{black} gamma distribution} in itself is a very attractive candidate for the distribution of the scale parameter \cite{Lawless}. Assuming the gamma distribution for the scale parameter, the joint distribution of the scale parameter and the gamma process has a closed form expression based on the Snedecor F distribution, which allows the computation of the failure time. This gamma random effect has been studied by different authors (\cite{Picon}, \cite{Xiang}, \cite{Tsai} among others). Pulcini \cite{Pulcini} extends the gamma random effect assuming that heterogeneity affects to the shape parameter. In this paper, unlike the articles mentioned above, we incorporate the random effects in the gamma process to model the process-specific heterogeneity assuming that the scale parameter follows a uniform distribution. As far as we are concerned, there are not many works that model the heterogeneity using a uniform distribution.

In short, in this paper, we deal with a system subject to multiple degradation processes. We assume that the degradation processes initiate according to a shot noise Cox process and grows according to a stationary gamma process. The system fails when a degradation process exceeds a failure threshold. An inspection policy is developed for this system and it is periodically inspected. {\color{black} At each inspection}, the decision on whether a preventive replacement or a corrective replacement should be taken, is performed. {\color{black} The analytical expression for the expected cost rate is obtained}.

The main contributions of this paper are summarized in the following aspects. 

\begin{itemize}
\item {\color{black} A shot noise Cox process is imposed as process of initiation of degradation processes. It extends and generalizes the combined model of initiation and growth that assumes a non homogeneous Poisson process of arrivals}. The methodology used in this paper allows to find easy-to-evaluate expressions for reliability expressions. 
\item {\color{black} A random effect model is incorporated to the model with the novelty of the probability distribution used to deal with the heterogeneity between processes. In this paper, a uniform distribution is used to model the inverse of the sc. It allows to evaluate the moments associated to this process and compare the variance between models with heterogeneity and without heterogeneity}. 
\item Providing the analytic cost model for the combined model of arrivals and growth. The proposed model allows to get the analytic expressions for the quantities related to the maintenance. 
\end{itemize}

This remainder of the paper is structured as follows. In Sections \ref{a} and \ref{b}, the arrival and growth processes are described. In Section \ref{c}, the combined process of initiation and growth is analyzed. Section \ref{c} explains also the heterogeneity model. In Section \ref{d}, the survival time is obtained and {\color{black} it is shown that the failure time distribution is increasing failure rate (IFR)}. Section \ref{e} is devoted to the maintenance analysis. Section \ref{f} shows numerical examples and last section conclude.

\section{Arrival processes} \label{a}

%{\color{black} Let $S_1, S_2, \ldots$ be the starting times of the degradation processes.  We assume in this paper that the arrival intensity of the degradation processes follows a shot-noise Cox process. This shot-noise Cox process is a generalization of the non homogeneous Poisson process assuming that the arrival intensity is stochastic. The stochastic nature of the intensity makes Cox process quite flexible an realistic for the purposes of modelling real phenomena. It has been used in various fields including risk analysis, economics, and optics, among others (\cite{ap1}, \cite{ap2}, \cite{ap3}, \cite{ap4}, \cite{ap5} and \cite{ap6}).  }
%
% 
%{\color{red} INCLUIR REFERENCIAS}

%Since the shot-noise Cox process is a particular case of a Cox process, the definition of Cox process is first recalled. 
%\begin{definition} A point process $\left\{N^*(t), t \geq 0 \right\}$ is a Cox process driven by the measure $\Lambda^*(\cdot)$, where $\Lambda^*$ is given by
%$$\Lambda^*(t)=\int_{0}^{t} \lambda^*(s)~ds, $$
%if, conditioning on $\Lambda^*$, $\left\{N^*(t), t \geq 0\right\}$ is %a Poisson point process with intensity $\lambda^*$. 
%\end{definition}

%The probability mass function of a Cox process is given by
%\begin{equation} \label{cox probability}
%P(N^*(t)=n) =\mathbb{E} \left[\exp(-\Lambda^*(t))\frac{\Lambda^*(t)^n}{n!}\right], \quad n=0, 1, 2, \ldots. 
%\end{equation}

We assume that a system is working in a dynamic environment and it is subject to external shocks. The external shocks arrive to the system according to a Poisson process with deterministic rate $\mu$. Suppose that if a shock occurs at epoch $T_1$, then at time $T_1+{\color{black} s}$ the contribution of the shock to the arrival of a degradation process is $h({\color{black} s})=\exp(-\delta({\color{black} s}-T_1))$ with $\delta >0$. Let $T_i, \ i=1,2,\ldots$ be the arrival times of the homogeneous Poisson process with rate parameter $\mu$ and let
$N(s)= \sum_{i=1}^{\infty} {\bf 1}_{\lbrace T_i \leq {\color{black} s} \rbrace}$ be the counting process associated with the homogeneous Poisson process. 
Then, the failure rate of the shot noise Cox process {\color{black} at time $s$}, $\lambda^*({\color{black} s})$, is given by

\begin{equation} \label{snprocess}
\lambda^*({\color{black} s})=\lambda_0(s)+ \sum_{i=1}^{N({\color{black} s})} \exp{(-\delta ({\color{black} s}-T_i))}, \quad s \geq 0,
\end{equation}
{\color{black} where the deterministic function $\lambda_0(s) >0$ provides a Poisson base level for the process. In absence of external shocks or if $\delta \rightarrow \infty$, a shot noise Cox process reduces to a non homogeneous Poisson process. }

Since {\color{black} the intensity $\lambda^*$} given by Eq. (\ref{snprocess}) is stochastic, the expectation {\color{black} at time $s$} {\color{black} is given by}, 
\begin{eqnarray} \nonumber
\mathbb{E} (\lambda^*({\color{black} s})) &=& \lambda_0(s)+\mathbb{E}\left(\sum_{i=1}^{N({\color{black} s})} \exp{(-\delta ({\color{black} s}-T_i))}\right) \\ \label{expectationlambdastar}
&=& \lambda_0(s)+\frac{n}{{\color{black} s} \delta}\left(1-\exp(-\delta {\color{black} s})\right),
\end{eqnarray}
{\color{black} using} that, conditional on $\left\{N(s)=n\right\}$,  {\color{black} the vector} $(T_1, T_2, \ldots, T_n)$ has the same distribution as the order statistics of sample $(U_1, U_2, \ldots, U_n)$ of size $n$ from the distribution
$$P(U \leq {\color{black} t})=\frac{{\color{black} t}}{{\color{black} s}}, \quad 0 \leq {\color{black} t} \leq {\color{black} s},  $$
where $0<t_1<t_2<\ldots<t_n \leq {\color{black} s}$ (see \cite{Lemoine2} for more details). 

Next, we compute the expected number of degradation processes at time ${\color{black} s}$. For that, the following lemma, valid for all counting process whose proof is given in \cite{Ross} (pp. 335-336), is used. 

\begin{lemma}\label{lemaNumberArrivals} 
Let $\lambda^*({\color{black} s})$, with ${\color{black} s} \geq 0$, be the random intensity function of the counting process $\left\{N^*({\color{black} s}), {\color{black} s} \geq 0\right\}$ having $N^*(0)=0$. Then
\begin{eqnarray*}
\mathbb{E}[N^*({\color{black} s})]&=& \int_{0}^{{\color{black} s}} \mathbb{E}[\lambda^*(u)]~du. 
\end{eqnarray*}
\end{lemma}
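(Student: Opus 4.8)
The plan is to use the defining (compensator) property of the stochastic intensity together with a monotone interchange of expectation and integration. By the very definition of the random intensity of the counting process $\{N^*(s), s\ge 0\}$, the compensated process
$$M(s) = N^*(s) - \int_0^s \lambda^*(u)\, du, \qquad s \ge 0,$$
is a zero-mean martingale with respect to the internal history $\mathcal{H}_s = \sigma\{N^*(u), \lambda^*(u) : u \le s\}$; heuristically, $\mathbb{E}[N^*(s+ds) - N^*(s) \mid \mathcal{H}_s] = \lambda^*(s)\,ds$. Since $N^*(0)=0$, we have $\mathbb{E}[M(s)] = \mathbb{E}[M(0)] = 0$, and therefore $\mathbb{E}[N^*(s)] = \mathbb{E}\left[\int_0^s \lambda^*(u)\,du\right]$.

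It then remains to move the expectation inside the integral. Because $\lambda^*(u)\ge 0$ for every $u$ --- it is an intensity, and in the present model it is explicitly the sum of the nonnegative terms appearing in Eq. (\ref{snprocess}) --- Tonelli's theorem applies to the nonnegative, jointly measurable map $(u,\omega)\mapsto \lambda^*(u)(\omega)$ on $[0,s]\times\Omega$, giving
$$\mathbb{E}\left[\int_0^s \lambda^*(u)\,du\right] = \int_0^s \mathbb{E}[\lambda^*(u)]\,du,$$
which is the asserted identity. In the particular setting of this paper one may alternatively avoid martingale language: conditionally on the whole intensity path $\{\lambda^*(u)\}_{0\le u\le s}$, the Cox process $N^*$ is an inhomogeneous Poisson process, so $\mathbb{E}[N^*(s)\mid \{\lambda^*(u)\}_{u\le s}] = \int_0^s \lambda^*(u)\,du$, and the tower property together with Tonelli again yields the claim.

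The only genuinely delicate point is the first step --- making the infinitesimal relation $\mathbb{E}[N^*(s+ds) - N^*(s)\mid \mathcal{H}_s] = \lambda^*(s)\,ds$ precise for an arbitrary counting process, which is the content of the Doob--Meyer / Watanabe characterization of the intensity. I would not reprove it here; as the statement of the lemma indicates, this is taken from \cite{Ross} (pp. 335--336). Once it is granted, the rest is the elementary two-line computation above. A purely elementary variant replaces the martingale step by a Riemann sum: partition $[0,s]$ as $0=u_0<\cdots<u_m=s$, write $N^*(s)=\sum_{k=0}^{m-1}\bigl(N^*(u_{k+1})-N^*(u_k)\bigr)$, take conditional expectations term by term, sum over $k$, and let the mesh tend to zero.
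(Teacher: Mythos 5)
Your proof is correct; the paper itself gives no argument for this lemma, merely citing \cite{Ross} (pp.~335--336), and your derivation --- the compensator/conditional-intensity identity $\mathbb{E}[N^*(s)]=\mathbb{E}\bigl[\int_0^s\lambda^*(u)\,du\bigr]$ followed by Tonelli, or equivalently conditioning on the intensity path of the Cox process and using the tower property --- is exactly the standard argument behind that citation, with the genuinely nontrivial step (the Watanabe/Doob--Meyer characterization of the intensity) deferred to the reference just as the paper does. No gaps; nothing further is needed.
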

Using Lemma \ref{lemaNumberArrivals}, the expected number of  degradation processes at time ${\color{black} s}$ is given by
\begin{eqnarray*}\nonumber
\mathbb{E}[N^*({\color{black} s})]  &=&  \int_0^{\color{black} s} \mathbb{E}\left[ \lambda^*(u)\right]~ du \\
&=& \Lambda_0(s)  + \frac{\mu {\color{black} s}}{\delta}+ \frac{\mu}{\delta^2} \left(\exp{(-\delta {\color{black} s})}-1\right).
\end{eqnarray*}
where
$$\Lambda_0(s)=\int_{0}^{s}\lambda_0(u)du. $$

\section{The stochastic process of growth} \label{b}
Once a degradation process arrives to the system, the process of growth is activated. We assume that degradation processes grow independently each other.  Due to its mathematical properties, a gamma process is used as mathematical model of the growth. The gamma process is a stochastic process with independent gamma-distributed increments. The gamma process with shape function $\alpha(t) > 0$ and scale parameter $\beta > 0$ is a continuous-time stochastic process $\left\{X(t), t \geq 0\right\}$ with the following properties:
\begin{enumerate}
\item $X(0)=0$ with probability one. 
\item $X(t_2)-X(t_1) \sim Gamma(\alpha(t_2)-\alpha(t_1), \beta)$ for $t_1 \leq t_2$. 
\item $X(t)$ has independent increments. 
\end{enumerate}
Recall that a random variable $X$ has a gamma distribution with shape parameter $\alpha > 0$ and scale parameter $\beta > 0$ if its probability density function is given by
\begin{equation} \label{densidadgamma}
f_{\alpha, \beta}(x)=\frac{\beta^{\alpha}}{\Gamma(\alpha)}x^{\alpha-1}\exp^{-\beta x}, \quad x >0,  
\end{equation}
where
\begin{equation*}
\Gamma(\alpha)=\int_{0}^{\infty} t^{\alpha-1}e^{-t}~dt. 
\end{equation*}
We assume in this paper that the system fails when the deterioration level of a degradation process first exceeds a failure threshold $L$. Suppose that a degradation process $X(t)$ starts at time 0 and it grows according to a homogeneous gamma process with parameters $\alpha$ and $\beta$. Let $\sigma_L$ be the first time at which this degradation process exceeds $L$. The random variable $\sigma_L$ is known as the first hitting time distribution and it has the following cumulative distribution function
\begin{equation} \label{survivalL}
F_{\sigma_L}(t) =P(X(t) \geq L)=\int_{L}^{\infty} f_{\alpha t, \beta}(x)~dx=\frac{\Gamma(\alpha t, \beta L)}{\Gamma(\alpha t)}, 
\end{equation}
for $t \geq 0$ where $f_{\alpha t, \beta}$ is given by (\ref{densidadgamma}) and
\begin{equation} \label{incomplete}
\Gamma(\alpha, x)=\int_{x}^{\infty} z^{\alpha-1}e^{-z}~dz, 
\end{equation}
denotes the incomplete gamma function for $x \geq 0$ and $\alpha >0$.

For subsequent analysis, the distribution of $\sigma_{L}-\sigma_{M}$ is used in this paper for two degradation levels $M<L$. According to \cite{CastroMercier}, the survival function of this variable is given by  
\begin{equation}   \label{doblefunction}
\bar{F}_{\sigma_{L}-\sigma_{M}} (t) = \int_{x=0}^{\infty}\int_{y=M}^{\infty} f_{\sigma_{M},X(\sigma_{M})}(x,y)F_{\alpha t, \beta}(L-y) ~dy ~dx,
\end{equation}
where $F_{\alpha t, \beta}$ denotes the distribution function of a gamma distribution with parameters $\alpha t$ and $\beta$ and $f_{\sigma_{M},X(\sigma_{M})}$ denotes the joint density function of $(\sigma_{M}, X(\sigma_{M}))$ provided in \cite{Bertoin} as
\begin{equation*} 
 f_{\sigma_{M}^h,X(\sigma_{M}^h)}(x,y)=\int_{0}^{\infty} \mathbf{1}_{\left\{M \leq y < M+s\right\}} f_{\alpha x, \beta}(y-s) \mu(ds), 
 \end{equation*}
and $\mu(ds)$ denotes the Lévy measure of the
gamma process with parameters $\alpha$ and $\beta$ given by
$$\mu(ds)=\alpha \frac{e^{-\beta s}}{s}, \quad s > 0. $$
Hence, 
\begin{eqnarray*} \nonumber
\bar{F}_{\sigma_{L}-\sigma_{M}} (t) &=& \int_{x=0}^{\infty} ~ dx \int_{s=0}^{\infty}~ ds \int_{y=M}^{M+s} ~dy f_{\alpha x, \beta}(y-s) \mu(ds) F_{\alpha t, \beta}(L-y) ~dy ~dx \\
&=& \int_{x=0}^{\infty} ~dx \int_{s=0}^{\infty} ~ ds \int_{y=M}^{M+s} ~dy \frac{\alpha(y-s)^{\alpha x-1}}{s \Gamma(\alpha x)} \int_{u=0}^{L-y} \frac{u^{\alpha t-1} \exp(-\beta(y+u))\beta^{\alpha x+\alpha t}}{\Gamma(\alpha t)} ~du
\end{eqnarray*}

The model considered above assume that all the degradation processes are independent and identically distributed. However, in some cases, the degradation processes degrade at different rates even though no differences in the environment is present \cite{Xiang}. In the next section, a  {\color{black} random effects process-specific} is used to model such variability.

\subsection{Random effects} \label{random_section}

{\color{black} The random effects model is a useful tool for modeling the variability in the different degradation rates and the heterogeneity among different degradation processes}. In this section we assume that the degradation processes can be described by a gamma process with random effects. {\color{black} To implement this random effects model, we assume that}
the scale parameter of the gamma process is random. It means that both the mean and the variance of the process are affected by the random effect parameter. {\color{black} Some authors have previously studied the random effects model in the gamma process using different probability distributions for the scale parameter. One of the most popular is the gamma distribution since it provides a closed-form expression using a Fisher distribution \cite{Lawless}. 

Inspired by the simplicity of its distribution and by the fact that in Bayesian theory, when no a prior statistical information about the parameters is given uniform distribution is used (\cite{Guida}), a uniform distribution is used to model the {\color{black}inverse of the} scale parameter of the gamma process.

Let $\left\{X_h(t), t \geq 0\right\}$ be a gamma process with random effects $\beta$ that controls the heterogeneity among the different degradation processes. We assume that $\left\{X_h(t), \, t \geq 0\right\}$ is a homogeneous gamma process with parameters $\alpha$ and $\beta$, {\color{black} where $\beta^{-1}$} follows a uniform distribution in $(a,b)$ with $0<a<b<\infty$. Then, the corresponding probability density function of $X_h(t)$ is given by
\begin{eqnarray} \nonumber
f_{X_h(t)} (u) &=& \int_{a}^{b} f_{\alpha t, \beta}(u)\frac{1}{b-a} ~d\beta^{-1} \\ \label{frandomeffect}
&=& {\color{black} \frac{1}{(b-a)} \frac{\Gamma(\alpha t-1, u/b)-\Gamma(\alpha t-1, u/a)}{\Gamma(\alpha t)}}, \quad u \geq 0, 
\end{eqnarray}
where $\Gamma(\cdot,\cdot)$ denotes the upper incomplete gamma function given by Eq. (\ref{incomplete}).

Figure \ref{densidad} plots the density $f_{X_h(t)}(u)$ for $t=5$ for a stationary gamma process with shape parameter $\alpha=1$ and {\color{black} scale parameter $\beta^{-1}$ following a uniform distribution in $(1, b)$. As we can see, the kurtosis of this distribution increases with respect to $b$.  }

\begin{figure}[tbph]
\begin{center} 
\includegraphics[width=0.5\textwidth]{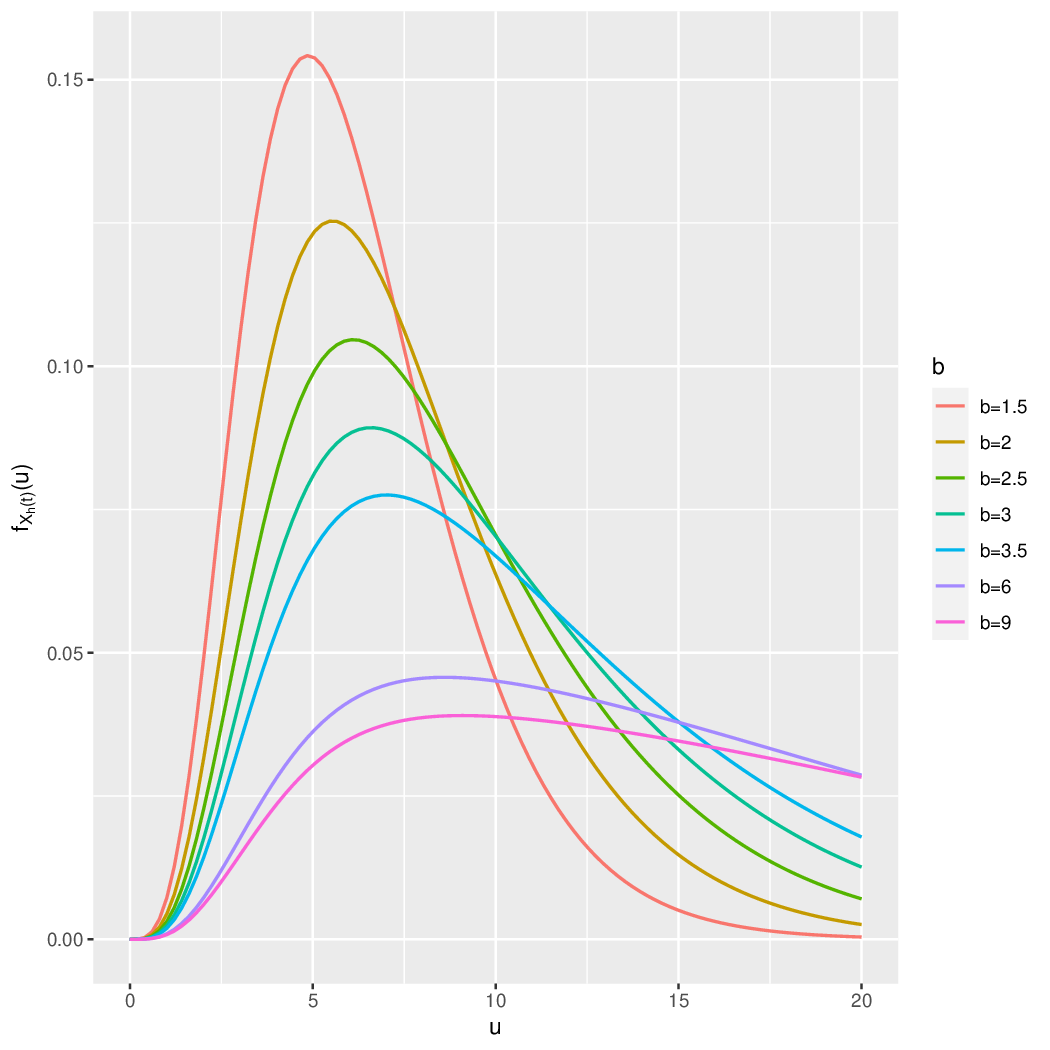}
\caption{{\protect\footnotesize {Density function $f_{X_h(t)}$ for $\alpha=1$, $t=5$ and $\beta^{-1}$ uniform in $(1,b)$.}}} \label{densidad}
\end{center}
\end{figure}

Let $\sigma_L^h$
be the first time in which the process with random effects exceeds the
failure threshold $L$. The survival function of $\sigma_L^h$ can be evaluated as
\begin{eqnarray*}
{F}_{\sigma_L^h}(t) &=& P(X_h(t) \geq L) \\
&=& \int_{L}^{\infty} f_{X_h(t)} (u)~du, 
\end{eqnarray*}
where $f_{X_h(t)} (u)$ is given by (\ref{frandomeffect}). In this random effects model, each process degrades according to a gamma process but the degradation behavior differs from process to process and the resulting process is not longer gamma distributed.

{ \color{black} Moments of $X_h(t)$ can be evaluated as
\begin{eqnarray*}
E[X_h^{n}(t)] &=& \prod_{n^*=0}^{n-1} \left(\alpha t +n^{*}\right) E[1/\beta^n] \\
&=& \prod_{n^*=0}^{n-1} \left(\alpha t +n^{*}\right)\frac{1}{n+1}\sum_{k=0}^{n} a^k b^{n-k}. 
\end{eqnarray*}
Hence, the expectation of $X_h(t)$ is given by 
\begin{eqnarray*}
\mathbb{E}(X_h(t)) &=& \frac{\alpha t (a+b)}{2}, 
\end{eqnarray*}
with variance
\begin{eqnarray*} \nonumber
Var(X_h(t)) &=& \mathbb{E}(X_h(t)^2)-(\mathbb{E}(X_h(t)))^2 \\ \nonumber
&=& \frac{(\alpha t+(\alpha t)^2) (b^2+ab+a^2)}{3}-\frac{(\alpha t)^2 (a+b)^2}{4} \\
&=& \frac{\alpha t(b^2+ab+a^2)}{3}+\frac{(\alpha t)^2}{12}(a-b)^2. 
\end{eqnarray*}
For fixed $t<\infty$, and assuming that $0<a<b<\infty$, $X_h(t)$ has finite mean and variance. 

}
Expectation and variance of the random effects process $\left\{X_h(t), \, t \geq 0\right\}$ increase with the time. As in the case without heterogeneities, expectation increases linearly with the time. In the case of the variance, unlike the case without heterogeneities, variance does not increase linearly with the time. Figures \ref{esperanza} and \ref{varianza2} show the expectation and variance of the process $\left\{X_h(t), \, t \geq 0\right\}$ for a stationary gamma process with shape parameter $\alpha=1$ and $\beta^{-1}$ following a uniform distribution in $(1,b)$.

\begin{figure}
 \centering
  \subfloat[Expectation]{
   \label{esperanza}
    \includegraphics[width=0.5\textwidth]{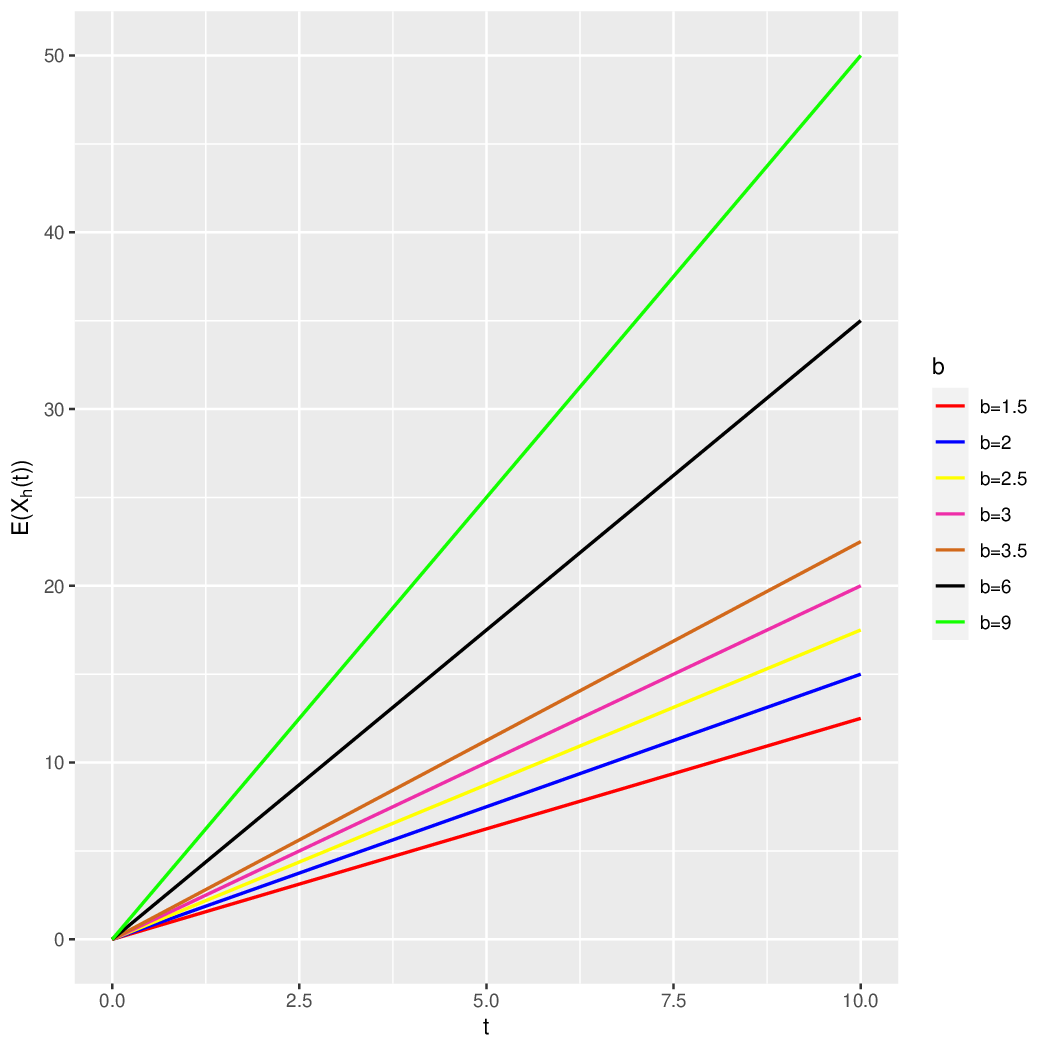}}
  \subfloat[Variance]{
   \label{varianza2}
    \includegraphics[width=0.5\textwidth]{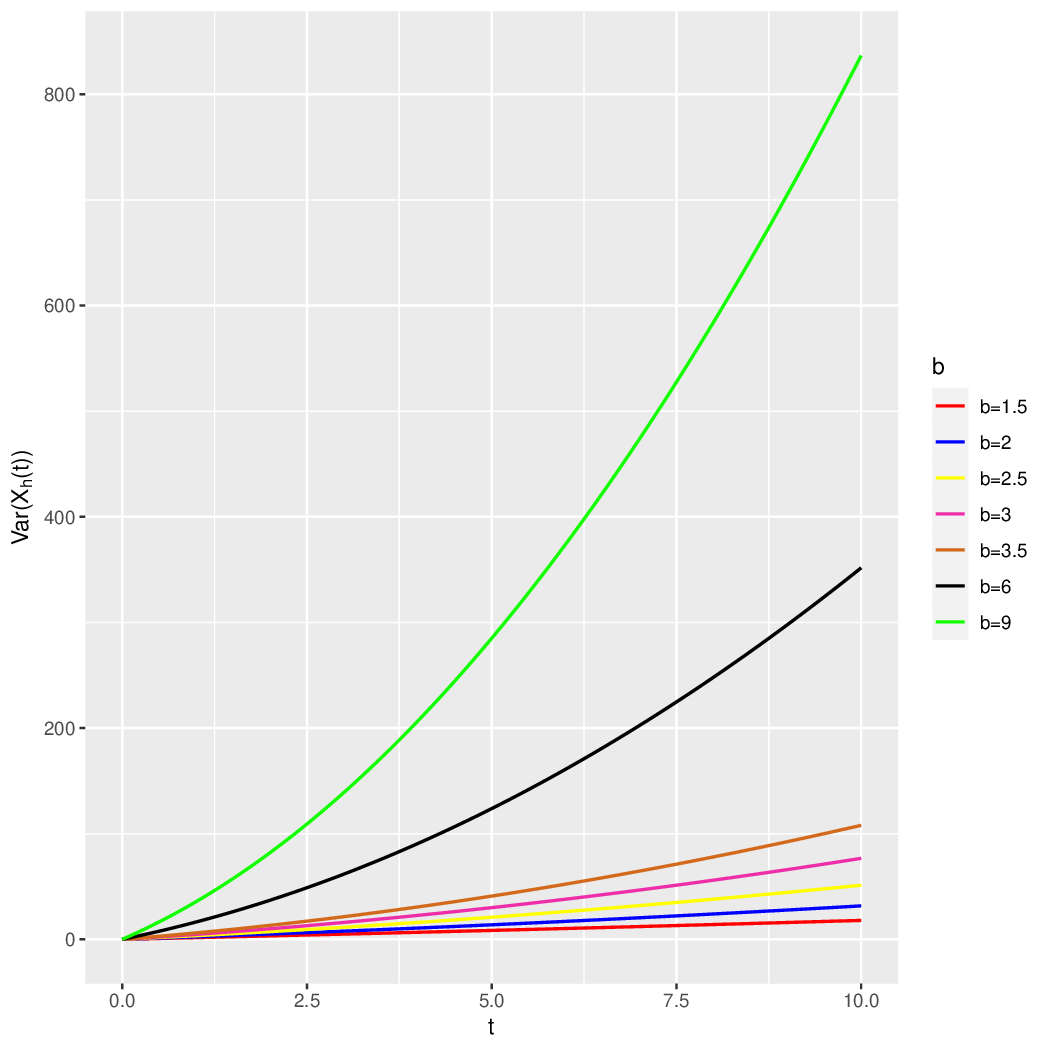}}
 \caption{Expectation and variance of $X_h(t)$ versus $t$ for $\alpha=1$ and {\color{black}$\beta^{-1}$} uniform in $(1,b)$}
 \label{6}
\end{figure}

%\begin{figure}[tp]
%\begin{center} 
%\includegraphics[width=0.75\textwidth]{mean}
%\caption{{\protect\footnotesize {Expectation of $X_h(t)$ versus $t$ for $\alpha=1$ and {\color{black}$\beta^{-1}$} uniform in $(1,b)$}}} \label{esperanza}
%\end{center}
%\end{figure}
%
%
%
%
%\begin{figure}[tp]
%\begin{center}
%\includegraphics[width=0.75\textwidth]{varianza}
%\caption{{\protect\footnotesize {Variance of $X_h(t)$ versus $t$ for $\alpha=1$ and ${\color{black}\beta^{-1}}$ uniform in $(1,b)$}}}  \label{varianza2}
%\end{center}
%\end{figure}
It is well known that, in a gamma process $X(t)$ without heterogeneity, the ratio $\text{Var}(X(t))/\mathbb{E}(X(t))$ does not depend on time. However, in the gamma process with heterogeneity presented in this paper, the quotient 
{\color{black}
\begin{eqnarray} \nonumber
\frac{\text{Var}(X_h(t))}{\mathbb{E}(X_h(t))} &=& \frac{2(1+\alpha t)(b^2+ab+a^2)}{3(a+b)}-\frac{\alpha t(a+b)}{2} \\ \label{q}
&=& \frac{4(b^2+ab+a^2)+\alpha t(b-a)^2}{6(a+b)}, 
\end{eqnarray}
is no longer constant (increases with $t$)}. As \cite{Pulcini} claims, the presence of a noticeable heterogeneity should ensure that the ratio (\ref{q}) increases with the time. Figure \ref{ratio} shows the ratio given by Eq. (\ref{q}) of the process $\left\{X_h(t), \, t \geq 0\right\}$ for a stationary gamma process with shape parameter $\alpha=1$ and {\color{black} $\beta^{-1}$} following a uniform distribution in $(1,b)$. {\color{black} As we can see}, the ratio is increasing with the time. 
\begin{figure}[tp]
\begin{center} 
\includegraphics[width=0.5\textwidth]{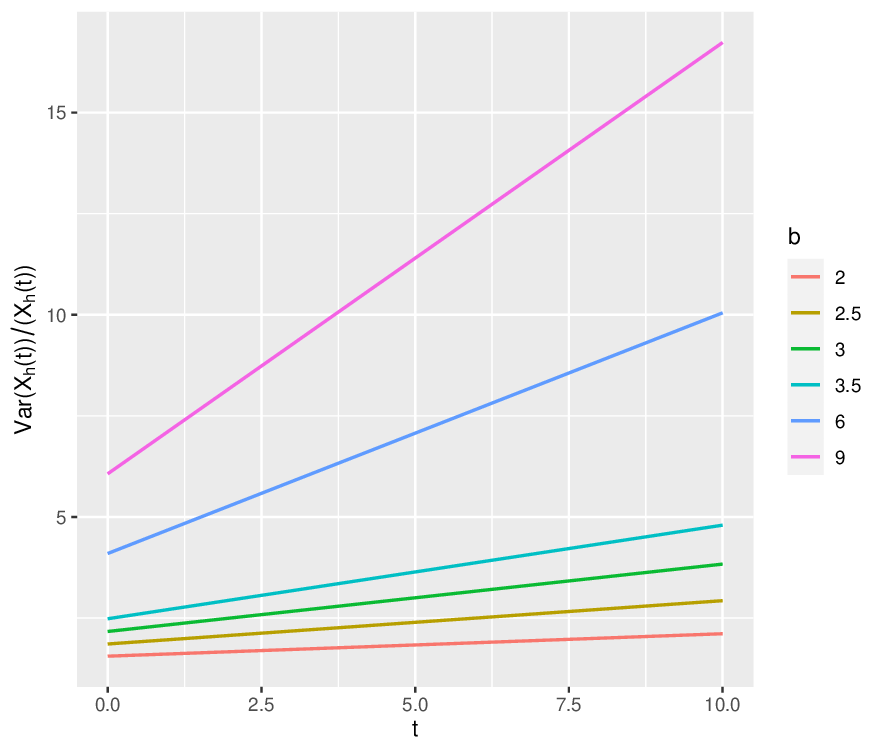}
\caption{{\protect\footnotesize {$Var(X_h(t))/E(X_h(t))$ versus $t$ with $\beta^{-1}$ uniform in $(1,b)$}}} \label{ratio}
\end{center}
\end{figure}

{\color{black}
The random effects model with heterogeneity modelled using a uniform distribution is next compared to a random effects model with heterogeneity modelled using a gamma distribution and compared to a model without heterogeneity. For that, let $\left\{X_h^{(1)}(t), \, t \geq 0 \right\}$, $\left\{X_h^{(2)}(t), \, t \geq 0 \right\}$ and $\left\{X(t), \, t \geq 0 \right\}$ be three stochastic processes where $X_h^{(1)}(t)$ denotes the degradation level at time $t$ for a gamma process with shape parameter $\alpha$ and scale parameter $\beta_1$ where $\beta_1^{-1}$ follows a uniform distribution $U(a,b)$, $X_h^{(2)}(t)$ denotes the degradation level at time $t$ for a gamma process with shape parameter $\alpha$ and scale parameter $\beta_2$ where $\beta_2$ follows a gamma distribution with parameter $k_1$ and $k_2$ $Gamma(k_1,k_2)$ and $X(t)$ denotes the degradation level at time $t$ for a gamma process with shape parameter $\alpha$ and scale parameter $\beta_3$ with deterministic $\beta_3$. To compare the three processes, firstly the expectations are standardized imposing 

\begin{equation*} 
E[X_h^{(1)}(t)]=E[X_h^{(2)}(t)]=E[X(t)], \quad t \geq 0. 
\end{equation*}
To get the same expectation, the parameters of the three processes fulfill
\begin{equation} \label{expectation3}
\frac{1}{\beta_3}=\frac{a+b}{2}=\frac{k_1}{k_2}. 
\end{equation}
Since the three processes have the same expectation, we get that the comparison in terms of variances is reduced to the comparison of the moments of second order $\mathbb{E}[X_h^{(1)}(t)^2]$, $\mathbb{E}[X_h^{(2)}(t)^2]$ and $\mathbb{E}[X(t)^2]$. We get that
\begin{eqnarray*}
\mathbb{E}[X_h^{(1)}(t)^2] &=& \left(\alpha t+(\alpha t)^2\right)\mathbb{E}\left[\frac{1}{\beta_1^2}\right]=\left(\alpha t+(\alpha t)^2\right)\frac{b^2+ab+a^2}{3} \\
\mathbb{E}[X_h^{(2)}(t)^2] &=& \left(\alpha t+(\alpha t)^2\right)\mathbb{E}\left[\frac{1}{\beta_2^2}\right] =\left(\alpha t+(\alpha t)^2\right)\frac{k_1(k_1+1)}{k_2}\\
\mathbb{E}[X(t)^2] &=& \left(\alpha t+(\alpha t)^2\right)\mathbb{E}\left[\frac{1}{\beta_3^2}\right] 
\end{eqnarray*}
Comparing $X_h^{(1)}(t)$ and $X_h^{(2)}(t)$ with parameters fulfilling Eq. (\ref{expectation3}) in terms of variances, we get that
$$Var(X_h^{(1)}(t)) \geq (\leq) \, Var(X_h^{(2)}(t)) \Leftrightarrow \frac{2(b^2+ab+a^2)}{3(a+b)} -1 \geq (\leq) \,  k_1.   $$
Hence, the variances of the two processes $Var(X_h^{(1)}(t))$ and $Var(X_h^{(2)}(t))$ depend on the parameters of the processes. Comparing the random effect model with uniform distribution and the process without heterogeneity, we get that
\begin{eqnarray*}
\mathbb{E}[X_h^{(1)}(t)^2]  &=&  \left(\alpha t+(\alpha t)^2\right)\frac{b^2+ab+a^2}{3}  \\
\mathbb{E}[X(t)^2]  &=&  \left(\alpha t+(\alpha t)^2\right) \frac{(a+b)^2}{4}
\end{eqnarray*}
hence $Var(X_h^{(1}(t)) \geq Var(X(t))$ for all $t$ and parameters fulfilling Eq. (\ref{expectation3}). 
Hence for two processes with the same expectation, a gamma process without heterogeneity shows a lower variance with respect to a gamma process with random effects model under a uniform distribution. 

}

Expression (\ref{frandomeffect}) allows us to obtain the likelihood function for observed data. Suppose now that $X_h(t)$ is recorded at times $t_1, t_2, \ldots, t_n$ with observations $x_1, x_2, \ldots, x_n$. The $x$-increments are defined by $\Delta x_j=x_{j}-x_{j-1}$ for $j=1, 2, \ldots, n$ and $x_0=0$ and the $t$-increments are defined by $\Delta t_j=t_{j}-t_{j-1}$ with $t_0=0$. Using (\ref{frandomeffect}) for fixed $\alpha$ and for a realization of $\beta^{-1}$, the joint density of $\left(\Delta x_1, \Delta x_2, \ldots, \Delta x_n\right)$ is given by the product

\begin{equation} \label{densidadre}
f(\Delta x_1, \Delta x_2, \ldots, \Delta x_n)=\frac{\prod_{j=1}^{n}(\Delta x_j)^{\alpha \Delta t_j-1}}{\prod_{j=1}^{n} \Gamma(\alpha \Delta t_j)} \beta^{\alpha t_n} \exp(-\beta x_n). 
\end{equation}

Suppose that $m$ degradation processes $\left\{X_{h,i}(t), t \geq 0\right\}$ for $i=1, 2, \ldots, m$ are observed with the $i$-th process observed at times $t_{0,i} < t_{1,i}, \ldots, <t_{n_i,i}$ with values
$x_{0,i} < x_{1,i}, \ldots, <x_{n_i, i}$. Analogously to \cite{Lawless}, the joint density of the increments 
$\Delta x_{j, i}=x_{j, i}-x_{j-1, i}$ for the $i$ process is given by (\ref{densidadre}) replacing $\Delta x_{j}$ by $\Delta x_{j, i}$, $\Delta t_j$ by $\Delta t_{j, i}$. The product across $i=1, 2, \ldots, m$ and the integration with respect to $\beta$ gives the likelihood function of the observed data in the $m$ processes.

\begin{equation} \label{densidadre2}
L(\alpha, a, b)=\frac{1}{(b-a)^m}\prod_{i=1}^{m}\frac{\prod_{j=1}^{n_i}(\Delta x_{j,i})^{\alpha \Delta t_{j,i}-1}}{\prod_{j=1}^{n_i} \Gamma(\alpha \Delta t_{j,i})} \frac{\Gamma(\alpha t_{n,i}-1,x_{n,i}/b)-\Gamma(\alpha t_{n,i}-1,x_{n,i}/a)}{x_{n,i}^{\alpha t_{n,i}}}.
\end{equation}

Inference about $(\alpha, a, b)$ can be based on the usual maximum likelihood procedures using Eq.(\ref{densidadre2}).

\begin{example} Different degradation processes have been simulated from $t=0$ to $t=30$ for $\alpha=1.5$ and $\beta^{-1}$ uniform in $(1-0.3, 1+0.3)$. The trajectories are plotted in Figures \ref{effects} and \ref{effects2} for six degradation processes and twenty six degradation processes respectively.

For fixed $\alpha=1.5$ and $\beta^{-1}$ uniform in $(1-\alpha^*, 1+\alpha^*)$, Figures \ref{likelihood} and \ref{likelihood2} show the negative of the logarithm of the likelihood of the six simulation processes versus $\alpha^*$. As we can check, the minimum of this function is reached for 
$\alpha^*=0.39$ with a value of -473.5824 (for 6 simulated processes) and
$\alpha^*=0.302$ with a value of the negative of the likelihood of -431.5405 for 26 degradation processes.

%Simulating 30 degradation processes with trajectories from $t=0$ to $t=30$ for $alpha=1.5$ and $\beta^{-1}$ uniform in $(1-0.3, 1+0.3)$ with trajectories given in Figure \ref{effects2}
%
%
%
%
%
%\begin{figure}[tbph]
%\begin{center} 
%\includegraphics[width=0.4\textwidth]{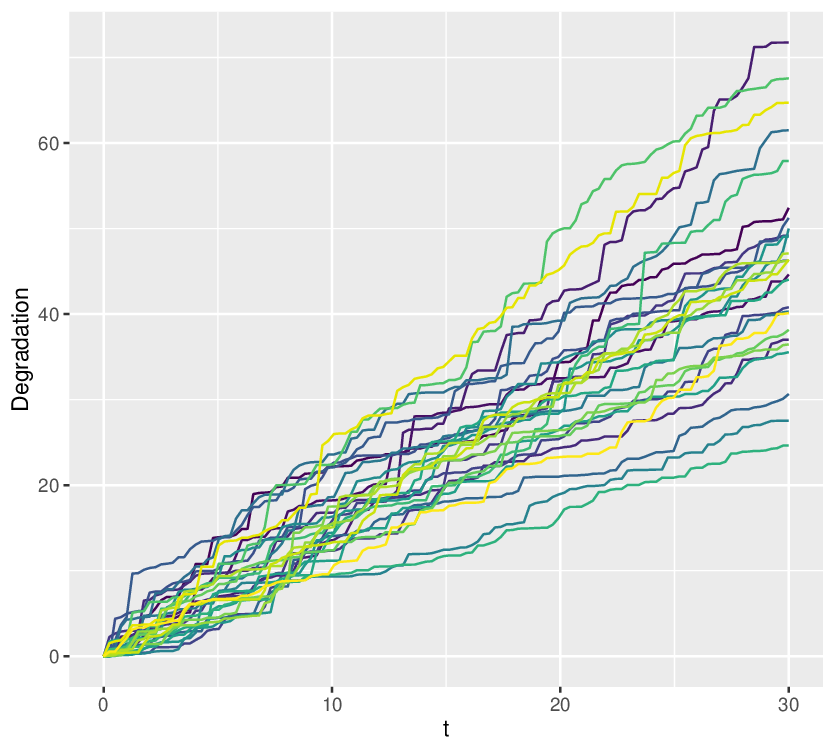}
%\caption{{\protect\footnotesize {Trajectories of 26 degradation processes.}}} \label{effects}
%\includegraphics[width=0.4\textwidth]{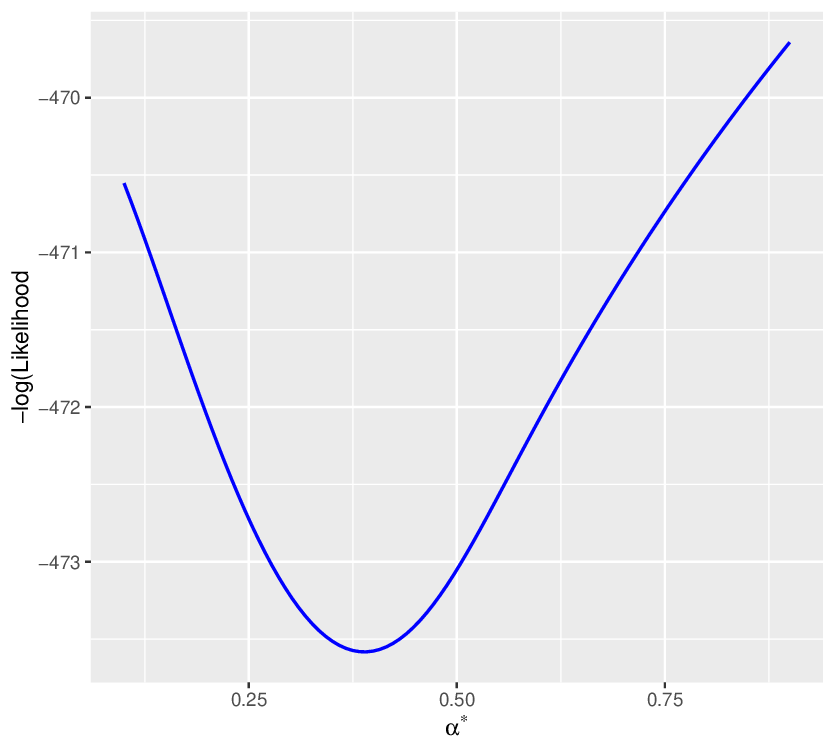}
%\caption{{\protect\footnotesize {Negative of the logarithm of the likelihood for 6 processes.}}} \label{likelihood}
%\end{center}
%\end{figure}
%
%\begin{figure}[tbph]
%\begin{center} 
%\includegraphics[width=0.5\textwidth]{likelihood}
%\caption{{\protect\footnotesize {Negative of the logarithm of the likelihood for 6 processes.}}} \label{likelihood}
%\end{center}
%\end{figure}
%
%For fixed $\alpha=1.5$ and $\beta^{-1}$ uniform in $(1-\alpha^*, 1+\alpha^*)$, Figure \ref{likelihood2} shows the negative of the logarithm of the likelihood of the six simulation processes versus $\alpha^*$. As we can check, the minimum of this function is reached for $\alpha^*=0.302$ with a value of -431.5405. 

\begin{figure}
 \centering
  \subfloat[Trajectories]{
   \label{effects}
    \includegraphics[width=0.5\textwidth]{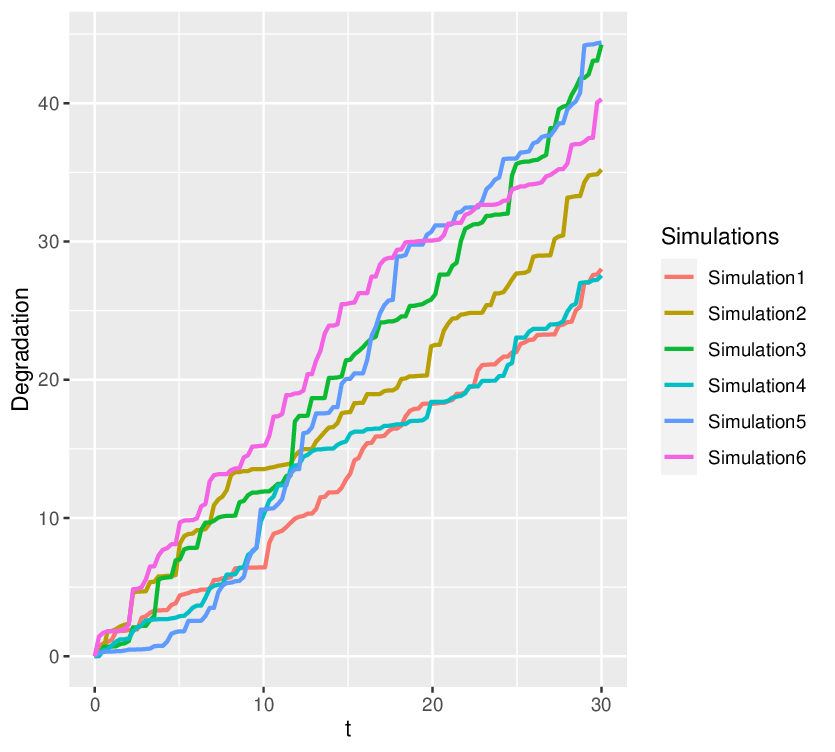}}
  \subfloat[Negative of the log likelihood]{
   \label{likelihood}
    \includegraphics[width=0.5\textwidth]{likelihood}}
 \caption{Trajectories and log likelihood}
 \label{6}
\end{figure}

\begin{figure}
 \centering
  \subfloat[Trajectories]{
   \label{effects2}
    \includegraphics[width=0.5\textwidth]{effects2}}
  \subfloat[Negative of the log likelihood]{
   \label{likelihood2}
    \includegraphics[width=0.5\textwidth]{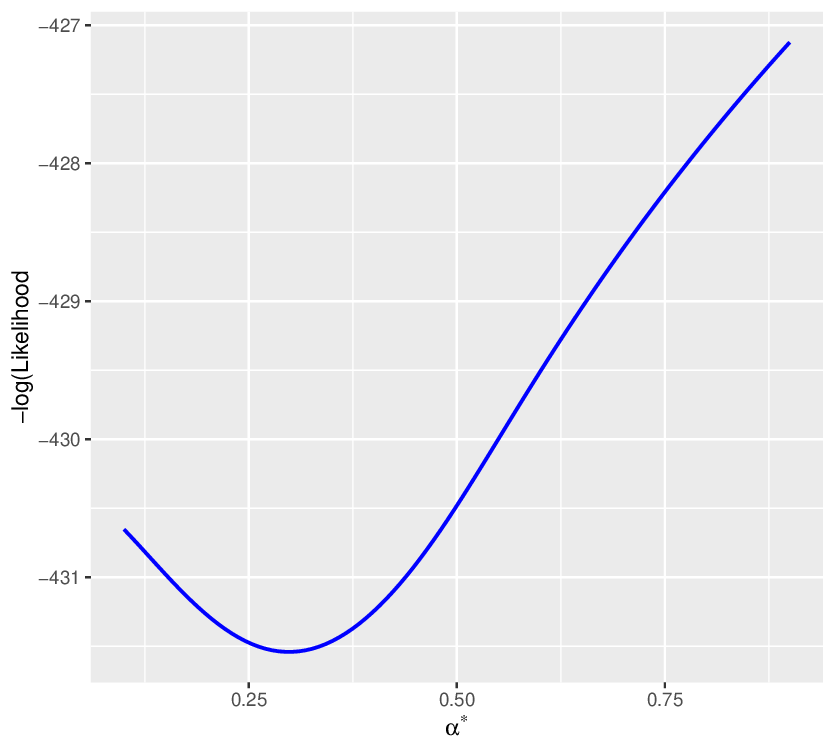}}
 \caption{Trajectories and log likelihood}
 \label{26}
\end{figure}

%\begin{figure}[tbph]
%\begin{center} 
%\includegraphics[width=0.7\textwidth]{effects2}
%\caption{{\protect\footnotesize {Trajectories of 26 degradation processes.}}} \label{effects}
%\end{center}
%\end{figure}
%
%\begin{figure}[tbph]
%\begin{center} 
%\includegraphics[width=0.7\textwidth]{likelihood2}
%\caption{{\protect\footnotesize {Negative of the logarithm of the likelihood for 26 processes.}}} \label{likelihood2}
%\end{center}
%\end{figure}

\end{example}

%\begin{figure}[tp]
%\begin{center} 
%\includegraphics[width=0.5\textwidth]{realizationbivariate}
%\caption{{\protect\footnotesize {}}} \label{bivariate}
%\end{center}
%\end{figure}

}

\section{Combined process of arrival and growth}  \label{c}

{\color{black}

Since the shot noise process with intensity given by Eq. (\ref{snprocess}) is a Cox process, this section shows some previous well-known results about general Cox processes. These results are used to evaluate quantities of interest of the model. 
}
The degradation processes start at random times $S_1, S_2, \ldots$ following a shot noise Cox process with intensity given by (\ref{snprocess}) and then they grow independently following a homogeneous gamma process with parameters $\alpha$ and $\beta$. Next, we shall analyse the combined process of arrival and growth. 

We define the deterioration level of the degradation process $k$ at time $t$ as 
\begin{equation*}
X_k(t)=X^{(k)}(t-S_k), \quad t \geq S_k,
\end{equation*}
where $X^{(k)}$ denotes a homogeneous gamma process with parameters $\alpha$ and $\beta$. We set $X^{(i)}$ with $i \in N$ to be i.i.d of a stationary gamma process with parameters $\alpha$ and $\beta$. 

Let $W_k$ be the time where the deterioration level of the degradation process $k$ exceeds the threshold $L$. Hence, $W_k$ is given by
\begin{equation*}
 W_k=S_k+\sigma_L,  \ \ \text{for} \ k=1,2, \ldots
 \end{equation*}
Figure \ref{realization} shows a realization of the combined process.

\begin{figure}[tbph]
\begin{center} 
\includegraphics[width=0.7\textwidth]{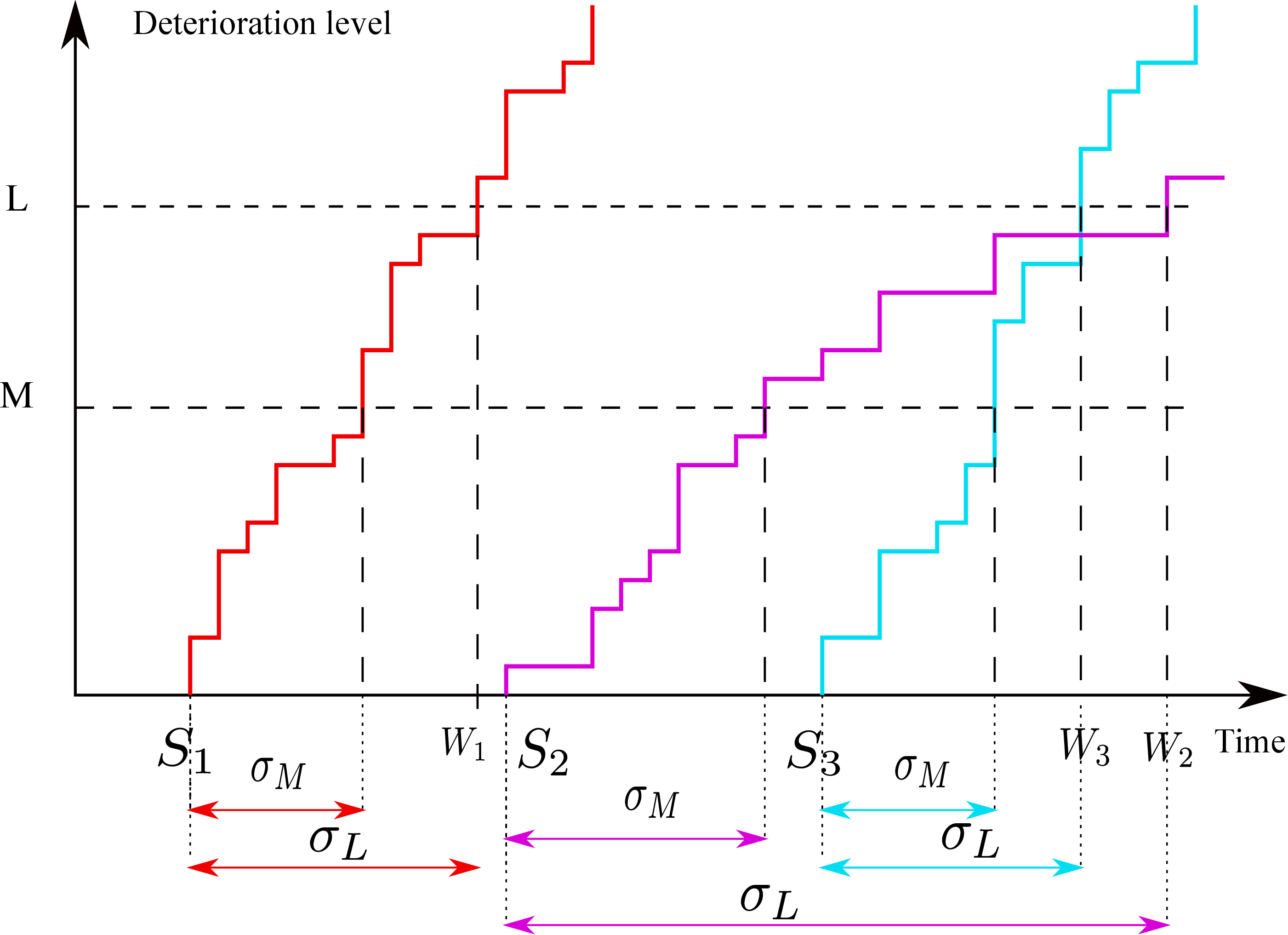}
\caption{{\protect\footnotesize {Realization of the combined process.}}} \label{realization}
\end{center}
\end{figure}

Let $N_L(t)$ be the  number of degradation processes whose deterioration levels exceed the failure threshold $L$ at time $t$, that is, 
\begin{equation} \label{NL}
N_L(t)=\sum_{k=1}^{\infty} \mathbf{1}_{\left\{W_k \leq t\right\}}. 
\end{equation}
The distribution of the counting process $\left\{N_L(t), \, t \geq 0\right\}$ is next obtained. 

Before obtaining the distribution of $\left\{N_L(t), \, t \geq 0\right\}$, some definitions and properties of the Cox processes are first recalled. These properties allow to obtain the combined expression for arrival and growth of the degradation processes. 

Because Cox process are essentially Poisson processes, most results for Poisson processes have counterparts for Cox processes using an expectation to take into account the stochastic intensity. The preservation of basic point process operations is an example of these results. We focus on the random translation of a point process, the so-called displaced process. 

\begin{definition} Let $\left\{N^*(t), t \geq 0\right\}$ be a counting process with occurrence times $S_1, S_2, \ldots$ and suppose that $\left\{D_1, D_2, D_3, \ldots\right\}, \quad i=1, 2, \ldots$ is a sequence of independent and identically distributed, non-negative random variables independent of $S_i$. The point process $\left\{N(t), t \geq 0\right\}$ whose occurrence times are $\left\{S_i+D_i\right\}$ for $i=1, 2, \ldots$ is called a displaced process.  
\end{definition}
It is well known that if $\left\{N^*(t), t \geq 0\right\}$ follows a non-homogeneous Poisson process with intensity $\lambda^*(t)$, then the displaced process $\left\{N(t), t \geq 0\right\}$ with occurrence times given by $\left\{T_i+D_i\right\}$ is also a non homogeneous Poisson process \cite{Caballe} with intensity
\begin{equation*}
\lambda(t)=\int_{0}^{t} \lambda^*(u)f(t-u) ~du, \quad t \geq 0, 
\end{equation*}
where $f$ denotes the common density of the random variables $D_i$. 

This preservation of the random translation of a non homogeneus Poisson process has the counterpart for the Cox process. It is explained in the following result (see References \cite{Serfozo}and \cite{Moller} for more details).

\begin{lemma} \label{lemaCox}
If $\left\{N^*t), t \geq 0\right\}$ is a Cox process with stochastic intensity $\lambda^*(t)$, then the displaced process $\left\{N(t), t \geq 0\right\}$ with occurrence times given by $\left\{T_i+D_i\right\}$ is also a Cox process with stochastic intensity
\begin{equation*} \label{stochasticintensity}
\lambda(t)=\int_{0}^{t} \lambda^*(u)f(t-u) ~du, \quad t \geq 0,  
\end{equation*}
where $f$ denotes the common density of the random variables $D_i$. 
\end{lemma}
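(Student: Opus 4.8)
The plan is to exploit the defining property of a Cox process: conditionally on a realization of its directing intensity it is an ordinary (inhomogeneous) Poisson process. So I would first condition on the $\sigma$-algebra $\mathcal{F}_{\lambda^*}=\sigma(\lambda^*(u):u\ge 0)$ generated by the whole path of the stochastic intensity. Given $\mathcal{F}_{\lambda^*}$, the process $\{N^*(t),\,t\ge 0\}$ is an inhomogeneous Poisson process with the now-deterministic intensity $u\mapsto\lambda^*(u)$, while the displacements $D_1,D_2,\ldots$ remain i.i.d.\ with common density $f$ and, being independent of the driving randomness as well as of the $S_i$, independent of $N^*$.

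Next I would invoke the displacement result for inhomogeneous Poisson processes recalled just before the statement (see \cite{Caballe}): the randomly translated process with occurrence times $\{S_i+D_i\}$ is, conditionally on $\mathcal{F}_{\lambda^*}$, an inhomogeneous Poisson process with intensity $\lambda(t)=\int_0^t\lambda^*(u)f(t-u)\,du$, where the convolution runs only over $[0,t]$ because the $D_i$ are non-negative. Local integrability of $\lambda$ on compact sets is inherited from that of $\lambda^*$ since $\int_0^\infty f=1$, so $\lambda$ is a.s.\ a legitimate Poisson intensity. The map $\lambda^*\mapsto\lambda$ is measurable, hence $\{\lambda(t),\,t\ge 0\}$ is itself a stochastic process, and the assertion ``conditionally on $\mathcal{F}_{\lambda^*}$, $N$ is Poisson with intensity $\lambda$'' is precisely the statement that $N$ is a Cox process directed by the random intensity $\lambda$; conditioning instead on the smaller $\sigma$-algebra generated by $\lambda$ yields the same conditional law by the tower property, which is why $\lambda$ alone is the directing intensity.

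The main obstacle, and the point where I would be most careful, is making the conditioning argument rigorous, i.e.\ justifying that a point process which is ``conditionally Poisson with a random intensity'' is well defined and is fully determined by that description. The clean way is to work with the probability generating functional (equivalently, the Laplace functional): for a test function $v$ with $0\le v\le 1$ and $v\equiv 1$ outside a compact set, write $G_N[v]=\mathbb{E}\big[\prod_i v(S_i+D_i)\big]=\mathbb{E}\big[\prod_i\tilde v(S_i)\big]$ with $\tilde v(s)=\int_0^\infty v(s+d)f(d)\,dd$, apply the known p.g.fl.\ of the Cox process $N^*$ at $\tilde v$, and simplify $\int_0^\infty(1-\tilde v(s))\lambda^*(s)\,ds=\int_0^\infty(1-v(t))\lambda(t)\,dt$ via Fubini and the substitution $t=s+d$. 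This gives $G_N[v]=\mathbb{E}\big[\exp\big(-\int_0^\infty(1-v(t))\lambda(t)\,dt\big)\big]$, which is exactly the p.g.fl.\ of a Cox process with stochastic intensity $\lambda$; since the p.g.fl.\ characterizes the law of a point process, the proof concludes. The Fubini step uses only nonnegativity of the integrands, so no integrability difficulties arise.
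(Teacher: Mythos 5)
Your argument is correct, but note that the paper does not actually prove Lemma \ref{lemaCox}: it states the displacement property as a known fact about Cox processes and refers the reader to Serfozo and M\o ller for the details. Your proposal therefore supplies a proof where the paper supplies a citation, and the proof you give is essentially the standard one from that literature: the heuristic first step (condition on the whole intensity path, reduce to the displacement theorem for inhomogeneous Poisson processes) is the right picture, and your rigorous version via the probability generating functional is sound. The key computation checks out: with $\tilde v(s)=\int_0^\infty v(s+d)f(d)\,dd$ one has $1-\tilde v(s)=\int_0^\infty\bigl(1-v(s+d)\bigr)f(d)\,dd$, and Fubini plus the substitution $t=s+d$ (legitimate by nonnegativity) turns $\int_0^\infty\bigl(1-\tilde v(s)\bigr)\lambda^*(s)\,ds$ into $\int_0^\infty\bigl(1-v(t)\bigr)\lambda(t)\,dt$ with $\lambda(t)=\int_0^t\lambda^*(u)f(t-u)\,du$, the upper limit $t$ coming precisely from the nonnegativity of the $D_i$; since the p.g.fl.\ characterizes the law of a point process, this identifies $N$ as a Cox process directed by $\lambda$. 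Two small points worth making explicit if you write this up: the identity $\mathbb{E}\bigl[\prod_i v(S_i+D_i)\bigr]=\mathbb{E}\bigl[\prod_i\tilde v(S_i)\bigr]$ uses conditioning on the points $S_i$ together with the independence and i.i.d.\ structure of the $D_i$ (and local finiteness of the displaced process, which holds here because the displacements are nonnegative), and the passage from conditioning on $\mathcal{F}_{\lambda^*}$ to conditioning on $\sigma(\lambda)$ via the tower property is fine but is only needed if you insist on $\lambda$ itself being the directing intensity in the strict definitional sense; the p.g.fl.\ identity already gives the stated conclusion.
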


Since, in this paper,  the arrival process (the shot noise Cox process) is a Cox process, using Lemma \ref{lemaCox}, the mathematical model for the counting process $\left\{N_L(t), \, t \geq 0\right\}$ given by Eq. (\ref{NL}) is obtained and it is shown in the next result.

\begin{lemma} \label{lemaa}
Let $S_1, S_2, \ldots$ be the occurrence times of the shot noise Cox process with stochastic intensity given by (\ref{snprocess}), then the displaced process $\left\{N_L(t), t \geq 0\right\}$ with occurrence times given by $\left\{S_i+\sigma_L\right\}$ is also a Cox process with stochastic intensity
\begin{equation} \label{stochasticintensityL}
\lambda_L(t)=\int_{0}^{t} \lambda^*(u)f_{\sigma_L}(t-u) ~du, \quad t \geq 0,  
\end{equation}
where $f_{\sigma_L}$ denotes the density of the first hitting time $\sigma_L$ with distribution given by Eq. (\ref{survivalL}). 
\end{lemma}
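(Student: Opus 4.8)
The plan is to recognise $\{N_L(t),\,t\geq 0\}$ as a displaced version of the shot noise Cox process of arrivals and then to quote Lemma \ref{lemaCox} verbatim. Almost all of the work is in setting up the first ingredient correctly; the analytic content is already packaged inside Lemma \ref{lemaCox}.

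First I would make the displacement variables explicit. Although the paper writes $W_k=S_k+\sigma_L$ with the process index suppressed, the quantity added to the $k$-th arrival $S_k$ is the first passage time above $L$ of the $k$-th growth process $X^{(k)}$, say $\sigma_L^{(k)}$. Because the growth processes $\{X^{(k)}\}_{k\geq 1}$ are i.i.d.\ copies of a single stationary gamma process with parameters $\alpha$ and $\beta$, the first passage times $\{\sigma_L^{(k)}\}_{k\geq 1}$ form an i.i.d.\ sequence with common cumulative distribution function $F_{\sigma_L}$ given by Eq.\ (\ref{survivalL}); each $\sigma_L^{(k)}$ is non-negative and finite almost surely, since $X^{(k)}(t)\to\infty$ as $t\to\infty$. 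Moreover, the growth processes are assumed independent of the arrival process, so the family $\{\sigma_L^{(k)}\}_{k\geq 1}$ is independent of the occurrence times $\{S_k\}_{k\geq 1}$. Hence the choice $D_k:=\sigma_L^{(k)}$ meets exactly the requirements placed on the displacement variables in the definition of a displaced process.

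Next I would observe that $\{N_L(t),\,t\geq 0\}$ as defined by Eq.\ (\ref{NL}) is, by construction, the displaced process obtained from the arrival process $\{N^*(t),\,t\geq 0\}$ by translating each point $S_k$ to $S_k+D_k=W_k$. The arrival process is a shot noise Cox process and hence a Cox process directed by the random intensity $\lambda^*$ of Eq.\ (\ref{snprocess}); and the law of $\sigma_L$ is absolutely continuous on $(0,\infty)$, with density $f_{\sigma_L}$ obtained by differentiating Eq.\ (\ref{survivalL}) in $t$. All hypotheses of Lemma \ref{lemaCox} are therefore in force with $f=f_{\sigma_L}$, and applying that lemma yields at once that $\{N_L(t),\,t\geq 0\}$ is a Cox process with stochastic intensity
\[
\lambda_L(t)=\int_{0}^{t}\lambda^*(u)\,f_{\sigma_L}(t-u)\,du,\qquad t\geq 0,
\]
which is Eq.\ (\ref{stochasticintensityL}).

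I do not expect a serious obstacle here: the only point requiring care is the bookkeeping in the first step, namely checking that the per-process first passage times really do form an i.i.d.\ family independent of the arrival times. This is exactly where the modelling assumptions matter — had the degradation processes carried different thresholds or different gamma parameters, the displacements would no longer be identically distributed and the plain displacement lemma could not be invoked in this form. Granting those assumptions, the statement reduces directly to Lemma \ref{lemaCox}.
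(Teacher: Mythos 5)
Your proposal is correct and matches the paper's argument: the paper also obtains Lemma \ref{lemaa} by viewing $\{N_L(t)\}$ as the displaced process of the shot noise Cox arrivals with i.i.d.\ displacements $\sigma_L^{(k)}$ and invoking Lemma \ref{lemaCox} directly. Your extra bookkeeping (i.i.d.\ hitting times, independence from the arrival times, absolute continuity of $\sigma_L$) is exactly the verification the paper leaves implicit.
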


The expectation of the stochastic intensity $\lambda_L(t)$ of the combined process of arrivals and growth is obtained.

\begin{lemma} \label{lemasi}
The expectation of the stochastic intensity of the displaced process $\left\{N_L(t), t \geq 0\right\}$ with occurrence times given by $\left\{S_i+\sigma_L\right\}$ and where $S_i$ are given by a shot noise Cox process with intensity given by Eq. (\ref{snprocess}) is given by
\begin{equation*} \label{combinedintensity}
\mathbb{E}[\lambda_L(t)]=\lambda_0F_{\sigma_L}(t)+\mu \int_{0}^{t}H(u)f_{\sigma_L}(t-u) ~du, 
\end{equation*}
where $f_{\sigma_L}$ is the density function of the hitting time $\sigma_L$ and $H(t)$ is given by
$$H(t)=\int_{0}^{t} e^{-\delta u} ~du. $$
\end{lemma}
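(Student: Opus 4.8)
The plan is to derive the result directly from Lemma~\ref{lemaa}. By that lemma the displaced process $\left\{N_L(t), t\geq 0\right\}$ is a Cox process with stochastic intensity $\lambda_L(t)=\int_0^t \lambda^*(u) f_{\sigma_L}(t-u)\,du$, where $f_{\sigma_L}$ is the density associated with the distribution (\ref{survivalL}). Since the integrand is nonnegative and $f_{\sigma_L}(t-u)$ is deterministic, Tonelli's theorem lets me interchange expectation and integration without any integrability check, so that $\mathbb{E}[\lambda_L(t)]=\int_0^t \mathbb{E}[\lambda^*(u)]\,f_{\sigma_L}(t-u)\,du$. The whole proof then reduces to plugging in the value of $\mathbb{E}[\lambda^*(u)]$ and splitting the integral.

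Next I would recompute $\mathbb{E}[\lambda^*(u)]$ from (\ref{snprocess}), reusing the argument already recalled before (\ref{expectationlambdastar}): conditioning on $\{N(u)=n\}$, the jump times $(T_1,\dots,T_n)$ are distributed as the order statistics of $n$ i.i.d.\ uniform variables on $(0,u)$, hence $\mathbb{E}\!\left[\sum_{i=1}^{n} e^{-\delta(u-T_i)}\,\middle|\,N(u)=n\right]=\tfrac{n}{u}\int_0^u e^{-\delta(u-r)}\,dr=\tfrac{n}{u}\,H(u)$ with $H(u)=\int_0^u e^{-\delta r}\,dr$; averaging over $N(u)\sim\mathrm{Poisson}(\mu u)$ gives $\mu H(u)$. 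Taking the base level $\lambda_0(u)\equiv\lambda_0$ constant (as used in the maintenance model of this section), this yields $\mathbb{E}[\lambda^*(u)]=\lambda_0+\mu H(u)$, which is just (\ref{expectationlambdastar}) rewritten through $H$. Substituting and using linearity gives $\mathbb{E}[\lambda_L(t)]=\lambda_0\int_0^t f_{\sigma_L}(t-u)\,du+\mu\int_0^t H(u) f_{\sigma_L}(t-u)\,du$, and the change of variable $v=t-u$ in the first integral turns it into $\int_0^t f_{\sigma_L}(v)\,dv=F_{\sigma_L}(t)$, producing the claimed formula.

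There is no genuine obstacle here; the statement is essentially a bookkeeping consequence of Lemma~\ref{lemaa} and the expectation computation of Section~\ref{a}. The only points requiring a little care are invoking Tonelli (rather than Fubini) so as to avoid arguing integrability of $\lambda^*$, correctly carrying out the conditioning-then-averaging step for the shot-noise term, and noting explicitly that the lemma uses a constant base intensity $\lambda_0$ whereas Section~\ref{a} allowed a general deterministic $\lambda_0(s)$ (with a general $\lambda_0(\cdot)$ the first term would instead read $\int_0^t \lambda_0(u) f_{\sigma_L}(t-u)\,du$).
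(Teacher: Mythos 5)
Your proof is correct and follows essentially the same route as the paper: both rely on $N_L$ being a Cox process with intensity $\lambda_L(t)=\int_0^t\lambda^*(u)f_{\sigma_L}(t-u)\,du$ (Lemma~\ref{lemaa}), interchange expectation and integration (you via Tonelli, the paper via Fubini with integrability of $\lambda^*$), and substitute $\mathbb{E}[\lambda^*(u)]=\lambda_0+\mu H(u)$ to read off the two terms. Your closing remark that the stated formula presumes a constant base level $\lambda_0$, and would otherwise read $\int_0^t\lambda_0(u)f_{\sigma_L}(t-u)\,du$, is an accurate and useful clarification of the paper's notation.
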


\begin{proof}
As we explained before, $\left\{N_L(t), t \geq 0\right\}$ is a Cox process with intensity $\lambda_L(t)$ given by (\ref{stochasticintensityL}) with expectation 
$$\mathbb{E}[\lambda_L(t)]=\int_{0}^{t} \mathbb{E}[\lambda^*(u)]f_{\sigma_L}(t-u) ~du, $$
and, using $\mathbb{E}[\lambda^*(u)]$ given in (\ref{expectationlambdastar}), the result is obtained {\color{black}using Fubini theorem since $\lambda^*$ is integrable}. 
\end{proof}
%\int_{0}^{t} \mathbb{E}[\lambda(u)]f_{\sigma_L}(t-u) ~du

Finally, Lemma \ref{lemasi} allows us to obtain the expected number of degradation processes that exceed the failure threshold $L$ at time $t$ given by Eq. (\ref{NL}). 
To derive $\mathbb{E}[N_L(t)]$, Lemma \ref{lemaNumberArrivals} is used.
\begin{eqnarray*}\label{expected_arrivals_displaced}
\mathbf{E}\left[N_L(t)\right] = \int_{0}^{t} \mathbb{E}[\lambda_L(s)] ~ds=\int_{0}^{t} \mathbb{E}[\lambda^*(u)]F_{\sigma_{L}}(t-u)~du, 
\end{eqnarray*}
where $\mathbb{E}[\lambda^*(u)]$ is given by (\ref{expectationlambdastar}). 

Notice that Lemmas \ref{lemaa} and \ref{lemasi} have a counterpart in the random effects case replacing $f_{\sigma_L}$ by $f_{\sigma_L^h}$ which is the density probability function of $\sigma_L^h$.

{\color{black} Next section analyzes the failure time for this system. The failure time distribution is obtained and it is shown that it is increasing failure rate.}

\section{Time to the system failure}  \label{d}

The system can be considered to be failed if at least one of the degradation processes reaches its failure threshold. We assume that the failure threshold is $L$ for all the degradation processes. 

Let $W_{[1]}$ be the instant at which, for the first time, the deterioration level of a degradation process exceeds the {\color{black} failure} threshold L, 
$$W_{[1]}=\min_{i=1, 2, \ldots} \left\{W_i\right\}. $$
The probability distribution of $W_{[1]}$ is next obtained. 

\begin{lemma} \label{lemaW1} Let $W_{[1]}$ be the instant at which, for the first time, the deterioration level of a degradation process exceeds the failure threshold $L$. Then, the survival function of $W_{[1]}$ is given by
\begin{equation*}
\bar{F}_{W_{[1]}}(t)=\exp\left(-\lambda_0 \int_{0}^{t}F_{\sigma_L}(u)~ du -\mu \int_{0}^{t}   \left(1-\exp \left(-\int_{0}^{u}e^{-\delta w}F_{\sigma_L}(u-w) ~dw\right) \right)~du\right)
\end{equation*}
where $F_{\sigma_L}$ is given by (\ref{survivalL}). 
\end{lemma}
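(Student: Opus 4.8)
The plan is to recognise $W_{[1]}$ as the first occurrence time of the displaced process $\{N_L(t),\, t\ge 0\}$, so that $\bar{F}_{W_{[1]}}(t)=P(W_{[1]}>t)=P(N_L(t)=0)$, and then to exploit the Cox structure of $N_L$ given by Lemma \ref{lemaa}. Conditionally on its directing intensity $\lambda_L$, the process $N_L$ is Poisson, so the void probability is
\begin{equation*}
P(N_L(t)=0)=\mathbb{E}\left[\exp\left(-\int_{0}^{t}\lambda_L(s)\,ds\right)\right],
\end{equation*}
where the expectation is over the randomness carried by $\lambda^*$, i.e.\ over the shock times $T_i$ (here $\lambda_0(s)\equiv\lambda_0$ is taken constant, consistent with the statement).

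Next I would compute the cumulative intensity. By Fubini's theorem (legitimate since $F_{\sigma_L}\le 1$ and the region of integration is bounded),
\begin{equation*}
\int_{0}^{t}\lambda_L(s)\,ds=\int_{0}^{t}\left(\int_{0}^{s}\lambda^*(u)f_{\sigma_L}(s-u)\,du\right)ds=\int_{0}^{t}\lambda^*(u)F_{\sigma_L}(t-u)\,du.
\end{equation*}
Substituting $\lambda^*(u)=\lambda_0+\sum_{i\ge 1}\mathbf{1}_{\{T_i\le u\}}e^{-\delta(u-T_i)}$ and interchanging summation and integration gives
\begin{equation*}
\int_{0}^{t}\lambda_L(s)\,ds=\lambda_0\int_{0}^{t}F_{\sigma_L}(v)\,dv+\sum_{i:\,T_i\le t}g(T_i),\qquad g(s):=\int_{0}^{t-s}e^{-\delta w}F_{\sigma_L}(t-s-w)\,dw\ge 0,
\end{equation*}
where the second equality uses the change of variable $w=u-T_i$. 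The first term is deterministic and factors out of the expectation. For the second term, the shock times $\{T_i\}$ restricted to $[0,t]$ form a homogeneous Poisson process of rate $\mu$, so the Laplace functional of the Poisson process yields
\begin{equation*}
\mathbb{E}\left[\exp\left(-\sum_{i:\,T_i\le t}g(T_i)\right)\right]=\exp\left(-\mu\int_{0}^{t}\left(1-e^{-g(s)}\right)ds\right).
\end{equation*}
This identity I would justify by conditioning on $\{N(t)=n\}$ and using, exactly as recalled after Eq.~(\ref{expectationlambdastar}), that given $N(t)=n$ the vector $(T_1,\dots,T_n)$ is distributed as the order statistics of $n$ i.i.d.\ uniforms on $[0,t]$; summing the resulting series in $n$ against the Poisson weights recovers the exponential (this is the Campbell formula for Poisson processes). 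Finally, combining the two factors and substituting $u=t-s$, so that $g(t-u)=\int_{0}^{u}e^{-\delta w}F_{\sigma_L}(u-w)\,dw$, gives precisely the claimed expression for $\bar{F}_{W_{[1]}}(t)$.

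The main obstacle is the middle step: passing rigorously from the random sum over shock points to a closed form, i.e.\ establishing the Poisson Laplace functional and checking that the conditioning and the interchange of sum, integral and expectation are permissible (non-negativity of $g$ together with $F_{\sigma_L}\le 1$ make Tonelli and dominated convergence applicable throughout). The remaining manipulations — the void-probability formula for Cox processes, the Fubini step, and the changes of variable — are routine bookkeeping.
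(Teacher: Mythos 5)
Your proposal is correct and follows essentially the same route as the paper: both identify $\bar{F}_{W_{[1]}}(t)=P(N_L(t)=0)$ via the Cox void probability, reduce $\int_0^t\lambda_L(s)\,ds$ to $\int_0^t\lambda^*(u)F_{\sigma_L}(t-u)\,du$, factor out the deterministic base-rate term, and evaluate the expectation over the shock times by conditioning on $N(t)=n$ and using the uniform order-statistics property before summing the Poisson series. Your phrasing of the middle step as the Poisson Laplace functional (Campbell formula) is just a packaged version of the paper's series computation, and your change of variables reproduces the paper's functions $C_1$ and $C_2$ exactly.
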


\begin{proof}
As we obtained above, counting process $\left\{N_L(t), t \geq 0\right\}$ is a Cox process with intensity $\lambda_L(t)$ given by (\ref{stochasticintensityL}). Hence
 \begin{eqnarray*} \nonumber
 \bar{F}_{W_{[1]}}(t) &=& P(N_L(t)=0) \\ \nonumber
&=& \mathbb{E}\left[\exp\left\{-\int_{0}^{t} \lambda_L(u) ~du\right\} \right]\\ \nonumber
&=& \mathbb{E}\left[\exp\left\{-\int_{0}^{t} \lambda^*(u)F_{\sigma_L}(t-u)~du \right\}\right] \\ 
&=& \exp\left\{-\lambda_0 \int_{0}^{t}F_{\sigma_L}(u)~ du\right\}\mathbb{E}\left[\exp\left\{-\int_{0}^{t}\sum_{i=1}^{N(t)}e^{-\delta(u-T_i)}F_{\sigma_L}(t-u)\mathbf{1}_{\left\{u >T_i\right\}}~  du \right\}\right]. 
 \end{eqnarray*}
Given $N(t)=n$ with $n>0$, we get that
\begin{eqnarray*} \nonumber
P\left(W_{[1]}>t|N(t)=n\right) &=& C_1(t)\mathbb{E}\left[\prod_{i=1}^{n} \exp\left\{-\int_{0}^{t} e^{-\delta(u-T_i)}F_{\sigma_L}(t-u) \mathbf{1}_{\left\{u >T_i\right\}}~ du\right\}\right] \\
 &=& C_1(t)\mathbb{E}\left[\left(\exp\left\{-\int_{0}^{t} e^{-\delta(u-U)}F_{\sigma_L}(t-u) \mathbf{1}_{\left\{u >U\right\}}~ du\right\}\right)\right]^n, 
\end{eqnarray*}
where
\begin{equation} \label{C1}
C_1(t)=\exp\left(-\lambda_0 \int_{0}^{t}F_{\sigma_L}(u)~ du\right), \quad t \geq 0, 
\end{equation}
and $U$ is a uniform variable in the interval $(0,t)$. Then 
\begin{eqnarray*}
P\left(W_{[1]}>t|N(t)=n\right) &=& C_1(t)\left(\int_{s=0}^{t} \frac{1}{t} \exp\left\{-\int_{s}^{t}e^{-\delta(u-s)}F_{\sigma_L}(t-u)~ du \right\}~ ds\right)^n, 
\end{eqnarray*}
therefore
\begin{eqnarray} \nonumber
\bar{F}_{W_{[1]}}(t) &=& \sum_{n=0}^{\infty} P(W_{[1]}>t|N(t)=n)P(N(t=n)) \\ \nonumber
&=& C_1(t) \exp(-\mu t)+ \sum_{n=1}^{\infty} P(W_{[1]}>t)P(N(t=n)) \\ \nonumber
&=& C_1(t) \exp(-\mu t)+C_1(t) \exp(-\mu t) \sum_{n=1}^{\infty} \frac{A(t)^n}{t^n}\frac{(\mu t)^n}{n!} 
\end{eqnarray}
where $A(t)$ is given by
\begin{equation*} 
A(t)=\int_{0}^{t} \exp\left(-\int_{0}^{x}e^{-\delta w}F_{\sigma_L}(x-w)~ dw\right)~ dx.  
\end{equation*}
Hence
\begin{eqnarray*} \nonumber
\bar{F}_{W_{[1]}}(t) &=& C_1(t) \exp(-\mu t)+C_1(t) \exp(-\mu t) \left(\exp(A(t) \mu)-1\right) \\ \nonumber
&=& C_1(t) \exp(-\mu t)) \exp(A(t) \mu \\
&=& C_1(t)C_2(t)
\end{eqnarray*}
where $C_1(t)$ is given by (\ref{C1})
and $C_2(t)$ equals to 
\begin{equation} \label{C2}
C_2(t)=\exp\left(-\mu \int_{0}^{t} \left(1-\exp\left(-\int_{0}^{x} e^{-\delta w}F_{\sigma_L}(x-w)~ dw \right)\right)~ dx\right), 
\end{equation}
and the result holds. 
\end{proof}

\begin{lemma} \label{lemmaIFR} Let $W_{[1]}$ be the instant at which, for the first time, the deterioration level of a degradation process exceeds the failure threshold $L$. Then $W_{[1]}$ is increasing failure rate (IFR). 
\end{lemma}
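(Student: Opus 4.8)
The plan is to start from the closed form of the survival function established in Lemma~\ref{lemaW1}. Writing $\bar{F}_{W_{[1]}}(t)=C_1(t)C_2(t)$ with $C_1,C_2$ as in (\ref{C1})--(\ref{C2}), we have $\bar{F}_{W_{[1]}}(t)=\exp(-R(t))$, where
\[
R(t)=\lambda_0\int_{0}^{t}F_{\sigma_L}(u)\,du+\mu\int_{0}^{t}\left(1-\exp\left(-\int_{0}^{x}e^{-\delta w}F_{\sigma_L}(x-w)\,dw\right)\right)dx.
\]
The integrand of $R$ is a continuous function of its upper limit (it is built from the continuous map $t\mapsto F_{\sigma_L}(t)=\Gamma(\alpha t,\beta L)/\Gamma(\alpha t)$), so $R$ is continuously differentiable on $[0,\infty)$, and since $\bar{F}_{W_{[1]}}>0$ everywhere the failure rate of $W_{[1]}$ is well defined and equals $r(t)=f_{W_{[1]}}(t)/\bar{F}_{W_{[1]}}(t)=R'(t)$. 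So the whole claim reduces to showing that $R'$ is non-decreasing (equivalently, that $-\log\bar{F}_{W_{[1]}}$ is convex).

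Next I would differentiate $R$ by the fundamental theorem of calculus, obtaining
\[
r(t)=R'(t)=\lambda_0\,F_{\sigma_L}(t)+\mu\left(1-\exp\left(-g(t)\right)\right),\qquad g(t):=\int_{0}^{t}e^{-\delta w}F_{\sigma_L}(t-w)\,dw,
\]
and then check that each of the two summands is non-decreasing in $t$. The first is immediate: $F_{\sigma_L}$ is a cumulative distribution function and $\lambda_0\ge 0$. For the second, since $x\mapsto 1-e^{-x}$ is increasing it is enough that $g$ be non-decreasing, and I would get this from a direct comparison: for $0\le t_1\le t_2$, using $F_{\sigma_L}\ge 0$ to discard the tail of the integral and then the monotonicity of $F_{\sigma_L}$ (extended by $0$ on the negative half-line),
\[
g(t_2)=\int_{0}^{t_2}e^{-\delta w}F_{\sigma_L}(t_2-w)\,dw\;\ge\;\int_{0}^{t_1}e^{-\delta w}F_{\sigma_L}(t_2-w)\,dw\;\ge\;\int_{0}^{t_1}e^{-\delta w}F_{\sigma_L}(t_1-w)\,dw=g(t_1).
\]
Hence $r$ is a sum of non-decreasing functions, which gives that $W_{[1]}$ is IFR.

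I do not expect a genuine obstacle here; the only points that need care are bookkeeping ones: justifying that $R$ is differentiable with $R'$ equal to its integrand evaluated at the endpoint (which rests on the continuity of $F_{\sigma_L}$, hence of $g$), and identifying $R'$ with the hazard rate (immediate from $\bar{F}_{W_{[1]}}=e^{-R}$). If a reader prefers an argument phrased through the hitting-time density $f_{\sigma_L}$ already introduced in Lemma~\ref{lemaa}, the monotonicity of $g$ can instead be read off from $g'(t)=\int_{0}^{t}e^{-\delta w}f_{\sigma_L}(t-w)\,dw\ge 0$, the Leibniz boundary term vanishing because $F_{\sigma_L}(0)=P(X(0)\ge L)=0$ for $L>0$; but the comparison inequality above has the advantage of not even requiring $\sigma_L$ to have a density.
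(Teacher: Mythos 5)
Your proposal is correct and takes essentially the same route as the paper: both read off the hazard rate $r_{W_{[1]}}(t)=\lambda_0 F_{\sigma_L}(t)+\mu\left(1-\exp\left(-\int_0^t e^{-\delta w}F_{\sigma_L}(t-w)\,dw\right)\right)$ from the closed-form survival function of Lemma~\ref{lemaW1} and then verify that it is non-decreasing. The only minor difference is that the paper establishes monotonicity by differentiating $r_{W_{[1]}}$ and observing that both terms of the derivative are nonnegative (which uses the density $f_{\sigma_L}$), whereas your primary argument is a density-free monotone comparison for the inner integral — a slight refinement of the same idea rather than a different method.
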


\begin{proof}
Using Lemma \ref{lemaW1}, the failure rate function of {\color{black} $W_{[1]}$} is given by
\begin{equation*}
r_{W_{[1]}}(t)=\frac{-C_1'(t)}{C_1(t)}+\frac{-C_2'(t)}{C_2(t)}, 
\end{equation*}
where $C_1$ and $C_2$ are given by (\ref{C1}) and (\ref{C2}) respectively. {\color{black} By making certain calculations}, we get that
\begin{equation*}
r_{W_{[1]}}(t)=\lambda_0F_{\sigma_L}(t)+\mu \left(1-\exp\left(-\int_{0}^{t}e^{-\delta w}F_{\sigma_L}(t-w) ~ dw\right)\right), 
\end{equation*}
with derivative
\begin{equation*}
r_{W_{[1]}}'(t)=\lambda_0 f_{\sigma_L}(t)+\mu\exp\left(-\int_{0}^{t} \exp^{-\delta w}F_{\sigma_L}(t-w)~ dw \right) \left(\int_{0}^{t}e^{-\delta w} f_{\sigma_L}(t-w)~ dw \right)
\end{equation*}
and it is non-decreasing in $t$ independent of the monotony of the failure rate of $\sigma_L$.  
\end{proof}
Notice that Lemmas \ref{lemaW1} and \ref{lemmaIFR} have a counterpart replacing $F_{\sigma_L}$ by $F_{\sigma_L^h}$. 

The limit of $r_{W_{[1]}}(t)$ is given by
\begin{equation*}
\lim_{t \rightarrow \infty} r_{W_{[1]}}(t)=\lambda_0+\mu\left(1-\exp(-1/\delta)\right). 
\end{equation*}

The increasing failure rate implies that the preventive maintenance is potentially worth implementing to improve the system reliability.

\section{Maintenance policy}  \label{e}
It is assumed that the degradation of the degradation processes cannot be directly and instantaneously observed. The system is monitored through periodic inspections which reveal the exact state of the degradation processes without error. {\color{black} Failures are {\em non self-announcing}, so that system failures are only detected  through inspections.} 
This paper makes the following assumptions. 

\begin{enumerate}
\item The system is inspected each $T$ time units and the following decisions are taken. 
\begin{itemize}
\item If the degradation level of a degradation process exceeds a preventive maintenance threshold $M$ but the system is not failed, a preventive replacement is performed and the system is replaced by a new one. 
\item If the system is failed, a corrective replacement is performed and the system is replaced by a new one. 
\item If there is no degradation processes in the system or their degradation levels do not exceed the preventive maintenance threshold, the system is left as it is. 
\end{itemize}
\item Replacements
are immediately performed and the duration time of these replacements is negligible. 
\item A sequence of costs is associated with the different maintenance actions:
\begin{itemize} 
\item A cost of $C_c$ monetary units is incurred when a corrective replacement is performed. 
\item A cost of $C_p$ monetary units is incurred when a preventive replacement is performed. 
\item The cost of the inspections is equal to $C_I$ monetary units. 
\item The downtime cost is equal to $C_d$ monetary units per {\color{black} time units}. 
\end{itemize}
\end{enumerate}

First, let $\left\{N_M(t), t \geq 0\right\}$ be the number of degradation processes that exceed the preventive threshold $M$ at time $t$. That is, 
\begin{equation*}
N_M(t)=\sum_{i=1}^{\infty} \mathbf{1}_{\left\{S_i+\sigma_M \leq t\right\}}. 
\end{equation*}
Using a similar reasoning to the development of $\left\{N_L(t), t \geq 0\right\}$, the point process $\left\{N_M(t), t \geq 0\right\}$ follows a Cox process with stochastic intensity
\begin{equation} \label{lambdaM2}
\lambda_M(t)=\int_{0}^{t} \lambda^*(u)f_{\sigma_M}(t-u)~du, \quad t \geq 0, 
\end{equation}
where $\lambda^*(u)$ is given by (\ref{snprocess})

Let $V_{[1]}$ be the instant at which, for the first time, the deterioration level of a degradation process exceeds the preventive threshold $M$. Using a similar reasoning to the method developed above, the distribution of $V_{[1]}$ can be expressed as 
\begin{eqnarray*}
\bar{F}_{V_{[1]}}(t) &=& \mathbb{E}\left[\exp\left\{-\int_{0}^{t} F_{\sigma_M}(t-u) \lambda^*(u)~ du \right\}\right], \quad t \geq 0, 
\end{eqnarray*}
where $F_{\sigma_M}$ is given by Eq. (\ref{survivalL}) replacing $L$ by $M$.

Let $R$ be the time to a replacement, that is, the time to a preventive replacement or a corrective maintenance. Hence, 
\begin{equation*}
R=\sum_{k=0}^{\infty} (k+1)T \mathbf{1}_{\left\{kT \leq V_{[1]} \leq (k+1)T\right\}}, 
\end{equation*}
with expectation
\begin{eqnarray} \label{expectedlength}
\mathbb{E}[R] &=&  \sum_{k=0}^{\infty} (k+1)T P(kT \leq V_{[1]} \leq (k+1)T) \\ \nonumber
&=& T \sum_{i=0}^{\infty} \bar{F}_{V_{[1]}}(iT). 
\end{eqnarray}
Let $N_I(R)$ be the total number of inspections performed on the system in a replacement cycle, then
\begin{equation} \label{expectedinspection}
\mathbb{E}[N_I(R)]=\frac{\mathbb{E}[R]}{T}=\sum_{i=0}^{\infty} \bar{F}_{V_{[1]}}(iT). 
\end{equation}
\subsection*{Preventive maintenance probability}
A preventive replacement is performed at time $(k+1)T$ if the system is not failed at time $(k+1)T$ and the preventive threshold $M$ is exceeded for the first time in $(kT, (k+1)T]$, that is, 
$$\left\{kT \leq V_{[1]} \leq (k+1)T < W_{[1]}\right\}.  $$

Let $\left\{N_{L-M}(t), t \geq 0\right\}$ be the following counting process
\begin{equation} \label{Nstar}
N_{L-M}(t)=\sum_{n=0}^{\infty} \mathbf{1}_{\left\{V_i+(\sigma_L-\sigma_M) \leq t\right\}},  
\end{equation}
where the distribution of $\sigma_L-\sigma_M$ is given by Eq. (\ref{doblefunction}). Since $\left\{V_i \right\}$,  for $i=1, 2, \ldots$ is a Cox process with intensity $\lambda_M$ given by (\ref{lambdaM2}), the translated process $N_{L-M}(t)$ given in (\ref{Nstar}) is a Cox process with intensity 
\begin{equation*}
\lambda_{L-M}(t)=\int_{0}^{t} \lambda_M(u)f_{\sigma_L-\sigma_M}(t-u) ~du, 
\end{equation*}
where $f_{\sigma_L-\sigma_M}(\cdot)$ denotes the density of (\ref{doblefunction}).

Let $P_p((k+1)T)$ be the preventive maintenance probability at time $(k+1)T$. That is, 
$$P_p((k+1)T)=P\left(kT \leq V_{[1]} < (k+1)T \leq W_{[1]}\right). $$
This probability is developed as follows. 
\begin{eqnarray*} \nonumber
 P_p((k+1)T) &=& \int_{kT}^{(k+1)T}f_{V_{[1]}}(u)P\left((k+1)T \leq W_{[1]}|V_{[1]}=u\right)~du \\
&=& \int_{kT}^{(k+1)T}f_{V_{[1]}}(u)\bar{F}_{\sigma_L-\sigma_M}((k+1)T-u)P\left(N_{L-M}(u,(k+1)T)=0\right)~du. 
 \end{eqnarray*}
Since $\left\{N_{L-M}(t), \, t \geq 0\right\}$ is a Cox process, we get that
\begin{equation*}
P(N_{L-M}(u,(k+1)T)=0)=\mathbb{E}\left[\exp\left\{-\int_{u}^{(k+1)T}\lambda_M(v)F_{\sigma_L-\sigma_M}((k+1)T-v) ~dv\right\}\right]. 
\end{equation*}
Let $g(u,(K+1)T)$ be the following expectation
\begin{equation*}
g(u,(K+1)T)=\mathbb{E}\left[\exp\left\{-\int_{u}^{(k+1)T}\lambda_M(v)F_{\sigma_L-\sigma_M}((k+1)T-v)~ dv \right\}\right]. 
\end{equation*}
Using Eq. (\ref{lambdaM2})
\begin{equation*}
\lambda_M(t)=\lambda_0F_{\sigma_M}(t)+\sum_{i=1}^{N(t)} \int_{0}^{t} e^{-\delta(u-T_i)}f_{\sigma_M}(t-u) ~du, 
\end{equation*}
and given $N((k+1)T)=n$, for $n>0$, we get that 
 \begin{eqnarray*} \nonumber
&& g_n(u,(K+1)T) = \exp\left\{-\lambda_0 \int_{u}^{(k+1)T} F_{\sigma_M}(v)F_{\sigma_L-\sigma_M}((k+1)T-v) ~dv\right\} \\ \nonumber && \mathbb{E}\left[\exp\left\{-\int_{u}^{(k+1)T} \left(\int_{w=0}^{v}e^{-\delta(w-U)}f_{\sigma_M}(v-w) \mathbf{1}_{\left\{w >U\right\}}\right)F_{\sigma_L-\sigma_M}((k+1)T-v)\right\}\right]^n \\
&& = D_1(u,(K+1)T)D_2(u,(K+1)T), 
\end{eqnarray*} 
 where $U$ is a uniform variable in $(0,(k+1)T)$ and $D_1((K+1)T)$ and $D_2((K+1)T)$ are given by
 \begin{equation} \label{D1}
 D_1(u,(k+1)T) = \exp\left\{-\lambda_0 \int_{u}^{(k+1)T} F_{\sigma_M}(v)F_{\sigma_L-\sigma_M}((k+1)T-v) ~dv\right\}, 
 \end{equation}
 and
 \begin{eqnarray*} \nonumber
D_2(u,(K+1)T) &=& \left(\int_{s=0}^{(k+1)T}\frac{1}{(k+1)T}\exp\left\{-\int_{v=u}^{(k+1)T} \mathcal{L}\sigma_M(v-s)F_{\sigma_L-\sigma_M}((k+1)T-v)\mathbf{1}_{\left\{v>s\right\}} ~dv\right\}\right)^n \\ 
&=& \frac{B(u,(K+1)T)^n}{(k+1)^n T^n}
 \end{eqnarray*}
where
$$B(u,(K+1)T)=\int_{s=0}^{(k+1)T}  \exp\left\{-\int_{u}^{(k+1)T} \mathcal{L}\sigma_M(v-s)F_{\sigma_L-\sigma_M}((k+1)T-v) ~dv\right\} ~ds, $$
and
$$\mathcal{L}\sigma_M(x)=\int_{0}^{x} e^{-\delta z} f_{\sigma_M}(x-z)~dz.  $$ 
Finally, the preventive maintenance probability at time $(k+1)T$ is given by
 \begin{eqnarray} \nonumber
 g(u,(K+1)T) &=& \sum_{n=0}^{\infty} g_n(u,(k+1)T)P[N((k+1)T)=n] \\ \nonumber
 &=& \exp(-\mu (K+1)T) D_1(u,(k+1)T) \\ \nonumber
 &+& \exp(-\mu (K+1)T) D_1(u,(k+1)T) \sum_{n=1}^{\infty} \frac{B(u,(K+1)T)^n}{k+1)^n T^n} \frac{\mu^n (k+1)^n T^n}{n!} \\ \label{gu}
 &=& \exp(-\mu (K+1)T) D_1(u,(k+1)T) \exp(\mu B(u,(k+1)T)), 
 \end{eqnarray} 
 where $D_1(u,(k+1)T)$ is given by (\ref{D1}). 
Finally, the probability of a maintenance preventive at time $(k+1)T$ is given by
 \begin{equation} \label{probabilidadpreventiva}
 P_p((k+1)T)= \int_{kT}^{(k+1)T}f_{V_{[1]}}(u) \bar{F}_{\sigma_L-\sigma_M}((k+1)T-u) g(u,(K+1)T) ~du. 
 \end{equation}
 
 \subsection*{Corrective maintenance probability}
Let $P_c((k+1)T)$ be the probability of a corrective maintenance. This probability is given by
\begin{equation*}
P_c((K+1)T)=P(kT\leq V_{[1]} \leq W_{[1]} \leq (K+1)T). 
\end{equation*} 
Using (\ref{probabilidadpreventiva}), we get that
\begin{eqnarray}\label{probabilidadcorrectiva}
P_c((k+1)T)=\int_{kT}^{(k+1)T}f_{V_{[1]}}(u) \left(\int_{u}^{(K+1)T} \frac{d}{dv} \left(\bar{F}_{\sigma_L-\sigma_M}((k+1)T-v) g(v,(K+1)T) \right)~dv \right)~du, 
\end{eqnarray}
where $g(v,(K+1)T)$ is given by (\ref{gu}). 
\subsection{Expected downtime in a replacement cycle}
Let $E_d(kT,(k+1)T)$ be the expected downtime in $(KT, (k+1)T)$. This expectation is given by
\begin{eqnarray} \label{expecteddowntime}
&& E_d(kT,(k+1)T) = \\ \nonumber
&& \int_{kT}^{(k+1)T}f_{V_{[1]}}(u) \left(\int_{u}^{(K+1)T} \frac{d}{dv} \left(\bar{F}_{\sigma_L-\sigma_M}((k+1)T-v) g(v,(K+1)T) \right)\left((K+1)T-v\right)~dv \right)~du, 
\end{eqnarray}
where $g(v,(K+1)T)$ is given by (\ref{gu})

To evaluate the performance of the maintenance strategy, the
minimization of the asymptotic cost rate is applied. The renewal
theorem (see Tijms \cite{Tijms} for more details) provides a simple
expression of this asymptotic cost rate since it reduces the renewal
process to the first renewal. In this paper, a renewal cycle is defined as the period of time between two consecutive replacements of the system. The expected cost rate for this maintenance model is given by
\begin{equation*}
C(T,M)=\frac{\mathbb{E}[C]}{\mathbb{E}[R]}
\end{equation*}
where $\mathbb{E}[C]$ denotes the expected cost in a replacement cycle and $\mathbb{E}[R]$
denotes the expected time to a replacement cycle. Hence
\begin{equation} \label{cost}
C(T,M) =\frac{C_c \sum_{k=1}^{\infty} P_c(kT)+C_p \sum_{k=1}^{\infty} P_p(kT)+C_I\mathbb{E}[N_I]+\sum_{k=0}^{\infty}C_d\mathbb{E}_d(kT,(k+1)T)}{\mathbb{E}[R]}, 
\end{equation}
where $\mathbb{E}[R]$, $\mathbb{E}[N_I]$, $P_p(kT)$, $P_c(kT)$, and $E_d(kT, (k+1)T)$ are given by Eq. (\ref{expectedlength}), Eq. (\ref{expectedinspection}), Eq. (\ref{probabilidadpreventiva}), Eq. (\ref{probabilidadcorrectiva}) and Eq. (\ref{expecteddowntime}) respectively. 
The search of the optimal maintenance strategy is reduced to
find the values $T_{opt}$ and $M_{opt}$ that minimize the function $C(T,M)$ given by Eq. (\ref{cost}). Next section focuses on numerical examples.

\section{Numerical examples}  \label{f}

{\color{black}
In this section, numerical examples obtained by simulation of the stochastic processes involved in the model are shown. {\color{black} The expression (\ref{cost}) for the objective cost function is difficult to manage it since it evolves infinity sums. A classical way to evaluate the objective cost function is to perform simulations on the points of the mesh and to find the optimal combination by visualization (see \cite{Huynh}) or using optimization metaheuristic methods such as genetic algorithms (\cite{Barcena}). We analyze the optimal maintenance strategy considering a model without heterogeneity and a model with heterogeneity. }
}

\subsection{$\beta$ deterministic}
We assume that degradation processes start according to a stochastic intensity given by
\begin{equation} \label{intensityst}
\lambda(t)=1+\sum_{i=1}^{N(t)}e^{-0.5(t-T_i)}, \quad t \geq 0, 
\end{equation} 
where $N(t)$ denotes the Poisson process {\color{black} of shocks} with parameter $\mu=2$ {\color{black} shocks per time unit}. The degradation processes degrade according to a gamma process with shape parameter $\alpha=1.1$ and scale parameter $\beta=1.4$. It is assumed that the system fails when a degradation process exceeds the failure threshold $L=10$. The following costs are also assumed (m.u. monetary units, t.u. time units )
\begin{equation} \label{seq_cost}
C_p=100 \, m.u., \quad C_c=200 \, m.u., \quad C_I=50 \,  m.u., \quad C_d=60 \,  m.u. \, per \,  t.u. . 
\end{equation}
The search of the optimal maintenance policy corresponds to find the pair $(T,M)$ that optimize the function $C(T,M)$ given by (\ref{cost}). That is, to find $T_{opt}$ and $M_{opt}$ that fulfill
$$C(T_{opt}, M_{opt})=\inf \left\{C(T,M), \ \ M \leq L \right  \}$$

%Figure \ref{costefig2} shows the expected cost rate versus $T$ and $M$ for shape parameter $\alpha=1.1$ and scale parameter $\beta=1.4$, that is, . 
%{\color{black} The shape and scale parameters of the gamma process have been carefully chosen. For this purpose, we have considered the shape parameter $\alpha \geq 1$. Taking $\alpha<1$ would imply an exponential decay, that is, the higher deterioration, the lower deterioration rate; what does not make sense in a non-decreasing deterioration process.
% For the scale parameter $\beta$, we have tried to choose values close to $\alpha$, such that the objective cost function is a convex function.
%} 

To {\color{black} obtain} Figure \ref{costefig2}, 6,000 simulations were performed on a grid of size $10$ in the interval $(0,25)$ for $T$ and a grid of size $8$ in $(0,10)$ for $M$. By visualization, the optimal values obtained for the time between inspections and the preventive threshold are $T_{opt}=6.3333$ and $M_{opt}=6.1429$ with an optimal expected cost of  $35.3005$ monetary units per unit time.

%By Monte-Carlo simulation of the expected cost rate function, the optimal values for $T$ and $M$ are found. The solution to the optimization problem is $T_{opt}= 5.93$ and $M_{opt}=2.61$, obtaining an optimal expected cost of $28.2291$ monetary units per unit time.

{\color{black} Several sensitivity analysis were conducted to determine how the stochastic intensity affects the proposed model and to analyze the robustness of the solution when different parameters vary.} Assuming that the degradation processes start according to the intensity given by Eq. (\ref{intensityst}), assuming that the system fails when the degradation level of a process exceeds the failure threshold $L=10$ and with $\beta$ deterministic, Table \ref{sensibilidad3} {\color{black} shows the influence of the shape and scale parameters of the gamma process on the optimal expected cost rate with a shaded grey scale when $\alpha$ and $\beta$ vary from $1$ to $1.9$ increasing by $0.15$ at each step. Along with Table \ref{sensibilidad3}, Tables \ref{sensibilidad4} and \ref{sensibilidad5} show the optimal values of $T$ and $M$ for each combination of values of $\alpha$ and $\beta$. Table \ref{sensibilidad_costes}  studies the influence of preventive and corrective maintenance costs on the total optimal cost. To observe the variation of the cost for preventive maintenance, 9 points have been considered in the interval $[90,110]$, while the cost for corrective maintenance has been inspected in the interval $[190,210]$, also with 9 points. This table has been obtained by fixing the optimal values for $T$ and $M$ obtained previously.} 

%Shape and scale parameters vary from 1 to 1.9 increasing by 0.15 at each step. 

%Tabla 1

{\color{black}
Table \ref{sensibilidad3} shows that $\alpha$ and $\beta$ have both effect on the optimal expected cost. It is worth noting that more differences in the optimal expected cost are found when the scale parameter varies.
Similarly, as we can see in Table \ref{sensibilidad4}, both parameters have effect on the optimal value for the time between inspections $T$. However Table \ref{sensibilidad5} does not show a clear influence of the parameters in the case of the optimal value for the preventive threshold $M$. The same can be said for Table \ref{sensibilidad_costes} regarding to the influence of maintenance costs in the model.

}

{\color{black}
As it is shown in Table \ref{sensibilidad3}, the optimal expected cost decreases as $\alpha$ and $\beta$ decreases. However, as it is shown in Table \ref{sensibilidad4}, the optimal time between inspections decreases when $\alpha$ and $\beta$ increases. Higher values for the scale and shape parameter of the gamma process mean a faster deterioration hence inspections tend to be more frequent to reduce costs. 

}

\begin{figure}[tbph]
\begin{center}
\includegraphics[width=0.7\textwidth]{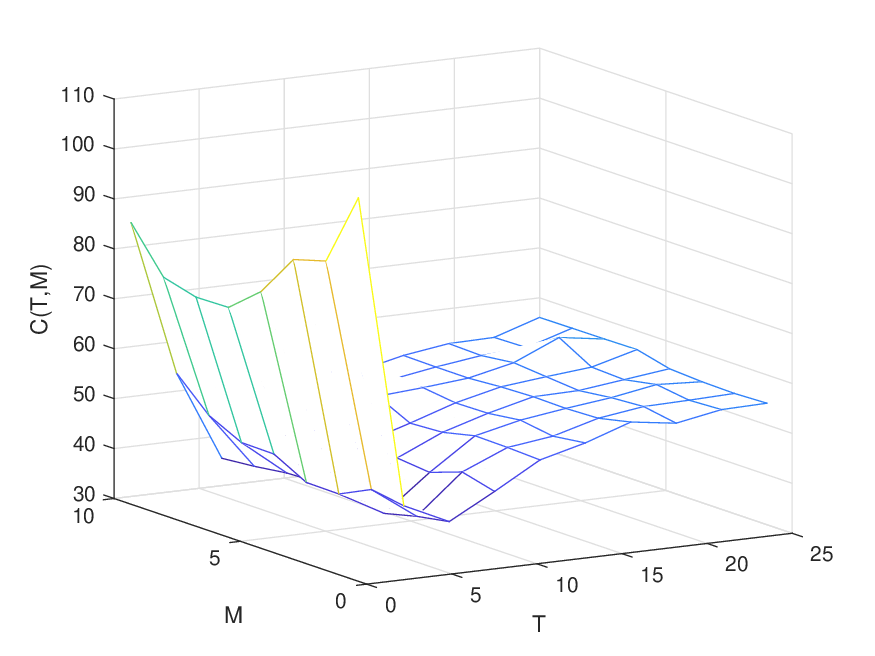}
\caption{{\protect\footnotesize {Expected cost rate versus $T$ and $M$.}}}
 \label{costefig2}
\end{center}
\end{figure}

\begin{table}[tbph]
\begin{center}
\begin{tabular}{|c|c|c|c|c|c|c|c|}
\hline
\backslashbox{$\alpha$}{$\beta$} & 1 & 1.15 & 1.3 & 1.45 & 1.6 & 1.75 & 1.9  \\
\hline
1  & \cellcolor{gris0} 19.8048 & \cellcolor{gris0}  21.4489 & \cellcolor{gris1} 25.7375  & \cellcolor{gris1} 27.8152 &\cellcolor{gris1} 30.7940 & \cellcolor{gris1}  30.7778 & \cellcolor{gris2} 35.7045 \\
1.15  & \cellcolor{gris0} 22.4085  & \cellcolor{gris0} 24.1170 & \cellcolor{gris1} 28.1522  & \cellcolor{gris2} 31.0347 & \cellcolor{gris2} 32.3892   & \cellcolor{gris2}  35.7728  & \cellcolor{gris3}  39.1335 \\
%\hline
1.3  & \cellcolor{gris0} 24.9069 & \cellcolor{gris1} 27.3980 &  \cellcolor{gris2} 31.9920 & \cellcolor{gris2} 33.3075 &  \cellcolor{gris2} 36.1599  & \cellcolor{gris3}  39.5103  & \cellcolor{gris3}  41.4842 \\
%\hline
1.45  & \cellcolor{gris1} 27.6355 & \cellcolor{gris1}  29.7365 & \cellcolor{gris2} 34.2041 & \cellcolor{gris2} 34.8008  & \cellcolor{gris3} 37.3992  &  \cellcolor{gris3} 40.4628  & \cellcolor{gris4} 46.5984 \\
%\hline
1.6  & \cellcolor{gris1}  29.0795 & \cellcolor{gris1} 30.4875 &  \cellcolor{gris2} 34.9262  & \cellcolor{gris3} 38.3855  & \cellcolor{gris3}  41.6051 & \cellcolor{gris4}  46.3327  & \cellcolor{gris4} 46.8082 \\
%\hline
1.75 & \cellcolor{gris1} 30.4135 & \cellcolor{gris2} 35.9786 & \cellcolor{gris3} 37.9817  & \cellcolor{gris4} 44.6625 & \cellcolor{gris4} 45.7936  & \cellcolor{gris4} 48.6405  & \cellcolor{gris5} 52.9519 \\
%\hline
1.9 & \cellcolor{gris1}  30.5336 & \cellcolor{gris2} 36.1878 & \cellcolor{gris3} 42.7939 & \cellcolor{gris4} 45.5782  &  \cellcolor{gris5} 49.2110  & \cellcolor{gris5}  52.4083 & \cellcolor{gris6} 60.1414 \\
\hline
\end{tabular}
\caption{Sensitivity analysis on the values of  $\alpha$ and $\beta$ for the optimal expected cost rate.} \label{sensibilidad3}
\end{center}
\end{table}

\begin{table}[tbph]
\begin{center}
\begin{tabular}{|c|c|c|c|c|c|c|c|}
\hline
\backslashbox{$\alpha$}{$\beta$} & 1 & 1.15 & 1.3 & 1.45 & 1.6 & 1.75 & 1.9  \\
\hline
1  & \cellcolor{gris6} 8.2220 & \cellcolor{gris6}  8.0979 & \cellcolor{gris4} 6.7767  & \cellcolor{gris4} 6.2052 &\cellcolor{gris3} 6.0996 & \cellcolor{gris4}  6.4665 & \cellcolor{gris1} 4.5638 \\
1.15  & \cellcolor{gris6} 7.5315  & \cellcolor{gris5} 7.1678 & \cellcolor{gris4} 6.2315  & \cellcolor{gris3} 5.7359 & \cellcolor{gris2} 5.1743   & \cellcolor{gris2} 5.2126  & \cellcolor{gris1} 4.6818 \\
%\hline
1.3  & \cellcolor{gris6} 7.4103 & \cellcolor{gris3} 6.0552 &  \cellcolor{gris4} 6.5784 & \cellcolor{gris2} 5.3184 &  \cellcolor{gris1} 5.0614  & \cellcolor{gris2}  5.1942  & \cellcolor{gris1}  4.8545 \\
%\hline
1.45  & \cellcolor{gris4} 6.3486 & \cellcolor{gris3}  5.9120 & \cellcolor{gris2} 5.4764 & \cellcolor{gris2} 5.2139  & \cellcolor{gris1} 4.6110  &  \cellcolor{gris1} 4.5256  & \cellcolor{gris1} 4.8004 \\
%\hline
1.6  & \cellcolor{gris3}  5.8572 & \cellcolor{gris3} 5.7846 &  \cellcolor{gris2} 5.2789  & \cellcolor{gris0} 4.4831  & \cellcolor{gris0}  4.1643 & \cellcolor{gris0}  4.3130  & \cellcolor{gris0} 3.9727 \\
%\hline
1.75 & \cellcolor{gris2} 5.2285 & \cellcolor{gris1} 4.9454 & \cellcolor{gris1} 4.6661  & \cellcolor{gris0} 4.3905 & \cellcolor{gris0} 4.2302  & \cellcolor{gris0} 3.9181  & \cellcolor{gris0} 4.2534 \\
%\hline
1.9 & \cellcolor{gris3}  5.6905 & \cellcolor{gris1} 4.6343 & \cellcolor{gris0} 4.3063 & \cellcolor{gris0} 3.9870  &  \cellcolor{gris0} 3.9727  & \cellcolor{gris0}  4.2653 & \cellcolor{gris1} 4.7039 \\
\hline
\end{tabular}
\caption{Sensitivity analysis on the values $\alpha$ and $\beta$ for the optimal value of $T$.} \label{sensibilidad4}
\end{center}
\end{table}

\begin{table}[tbph]
\begin{center}
\begin{tabular}{|c|c|c|c|c|c|c|c|}
\hline
\backslashbox{$\alpha$}{$\beta$} & 1 & 1.15 & 1.3 & 1.45 & 1.6 & 1.75 & 1.9  \\
\hline
1  & \cellcolor{gris6} 3.9426 & \cellcolor{gris2}  2.1435 & \cellcolor{gris3} 2.6667  & \cellcolor{gris1} 1.6410 &\cellcolor{gris1} 1.7250 & \cellcolor{gris3}  2.7337 & \cellcolor{gris1} 1.6285 \\
1.15  & \cellcolor{gris0} 1.2083  & \cellcolor{gris3} 2.6261 & \cellcolor{gris2} 2.1233  & \cellcolor{gris3} 2.6294 & \cellcolor{gris3} 2.6088   & \cellcolor{gris1}  1.6989  & \cellcolor{gris4}  2.9067 \\
%\hline
1.3  & \cellcolor{gris6} 4.0309 & \cellcolor{gris1} 1.3344 &  \cellcolor{gris4} 2.9083 & \cellcolor{gris3} 2.5836 &  \cellcolor{gris2} 2.2619  & \cellcolor{gris2}  2.0369  & \cellcolor{gris3}  2.3512 \\
%\hline
1.45  & \cellcolor{gris0} 1.1294 & \cellcolor{gris1}  1.6428 & \cellcolor{gris0} 1.3248 & \cellcolor{gris3} 2.4431  & \cellcolor{gris1} 1.8680  &  \cellcolor{gris1} 1.5133  & \cellcolor{gris1} 1.8599 \\
%\hline
1.6  & \cellcolor{gris2}  2.2714 & \cellcolor{gris1} 1.8112 &  \cellcolor{gris3} 2.3257  & \cellcolor{gris0} 1.3541  & \cellcolor{gris4}  2.8780 & \cellcolor{gris0}  1.4741  & \cellcolor{gris0} 1.3602 \\
%\hline
1.75 & \cellcolor{gris2} 1.9457 & \cellcolor{gris2} 2.2737 & \cellcolor{gris0} 1.1288  & \cellcolor{gris0} 1.2540 & \cellcolor{gris2} 2.0906  & \cellcolor{gris0} 1.1675  & \cellcolor{gris1} 1.9039 \\
%\hline
1.9 & \cellcolor{gris3}  2.3188 & \cellcolor{gris4} 2.8609 & \cellcolor{gris4} 3.0805 & \cellcolor{gris0} 1.2140  &  \cellcolor{gris2} 2.1427 & \cellcolor{gris1}  1.6137 & \cellcolor{gris1} 1.5923 \\
\hline
\end{tabular}
\caption{Sensitivity analysis on the values of $\alpha$ and $\beta$ for the optimal value of $M$.} \label{sensibilidad5}
\end{center}
\end{table}

\begin{table}[tbph]
\begin{center}
\begin{tabular}{|c|c|c|c|c|c|c|c|c|c|}
\hline
\backslashbox{$C_c$}{$C_p$} & 90 & 92.5 &  95 & 97.5 & 100 & 102.5 & 105 & 107.5 & 110 \\
\hline
190  & \cellcolor{gris0} 32.05 & \cellcolor{gris0}  32.11 & \cellcolor{gris1} 35.77  & \cellcolor{gris1} 34.97 &\cellcolor{gris1} 36.74 & \cellcolor{gris1}  35.35 & \cellcolor{gris1} 36.91 & \cellcolor{gris1} 36.56 & \cellcolor{gris1}  35.76 \\
192.5  & \cellcolor{gris0} 33.32 & \cellcolor{gris2} 38.32 & \cellcolor{gris0} 33.63  & \cellcolor{gris0} 31.32 & \cellcolor{gris1} 35.63   & \cellcolor{gris3}  40.54  & \cellcolor{gris2}  37.32 &  \cellcolor{gris3} 41.83 & \cellcolor{gris2}  38.93 \\
%\hline
195  & \cellcolor{gris5} 46.17 & \cellcolor{gris0} 33.80 &  \cellcolor{gris6} 50.28 & \cellcolor{gris6} 51.64 &  \cellcolor{gris6} 51.88  & \cellcolor{gris6}  52.29  & \cellcolor{gris1}  36.58 & \cellcolor{gris2}  37.80 & \cellcolor{gris3}  40.59 \\
%\hline
197.5  & \cellcolor{gris2} 38.60 & \cellcolor{gris3}  41.46 & \cellcolor{gris2} 37.50 & \cellcolor{gris2} 37.91  & \cellcolor{gris1} 36.74  &  \cellcolor{gris2} 38.06  & \cellcolor{gris1} 36.38 & \cellcolor{gris4} 44.02 & \cellcolor{gris2}  38.95 \\
%\hline
200 & \cellcolor{gris0}  33.52 & \cellcolor{gris1} 36.48 &  \cellcolor{gris3} 40.35  & \cellcolor{gris2} 39.34  & \cellcolor{gris1}  35.00 & \cellcolor{gris2}  37.06  & \cellcolor{gris0} 33.76 & \cellcolor{gris0} 33.77 &  \cellcolor{gris3}  40.38 \\
%\hline
202.5 & \cellcolor{gris0} 34.21 & \cellcolor{gris1} 35.43 & \cellcolor{gris2} 39.97  & \cellcolor{gris2} 38.45 & \cellcolor{gris2} 39.47  & \cellcolor{gris1} 35.05  & \cellcolor{gris1} 36.99 & \cellcolor{gris1} 36.51  &  \cellcolor{gris1} 36.97 \\
%\hline
205 & \cellcolor{gris1}  34.38 & \cellcolor{gris1} 35.60 & \cellcolor{gris1} 36.67 & \cellcolor{gris1} 35.68  &  \cellcolor{gris1} 35.27 & \cellcolor{gris2}  37.16 & \cellcolor{gris0} 33.11 &  \cellcolor{gris4} 45.96 & \cellcolor{gris6} 49.05 \\
207.5 & \cellcolor{gris1}  35.04 & \cellcolor{gris1} 34.76 & \cellcolor{gris0} 32.51 & \cellcolor{gris1} 35.66  &  \cellcolor{gris3} 40.10 & \cellcolor{gris1}  36.01 & \cellcolor{gris3} 40.27 &  \cellcolor{gris2} 37.81 &  \cellcolor{gris2} 38.92 \\
210 & \cellcolor{gris1}  36.57 & \cellcolor{gris2} 37.45 & \cellcolor{gris2} 37.00 & \cellcolor{gris2} 38.76  &  \cellcolor{gris2} 37.74 & \cellcolor{gris2}  39.62 & \cellcolor{gris4} 43.93 & \cellcolor{gris2} 39.07  & \cellcolor{gris2} 39.50 \\
\hline
\end{tabular}
\caption{Sensitivity analysis on the values of $C_c$ and $C_p$ for the optimal expected cost rate.} \label{sensibilidad_costes}
\end{center}
\end{table}

\subsection{Random effects model}

This section assumes that the scale parameter $\beta$ is random.  As before, the degradation processes start according to the stochastic failure rate given by Eq. (\ref{intensityst}) being $N(t)$ a Poisson process with parameter $\mu=2$. We assume that the shape parameter of the gamma process is equal to $\alpha=1.1$ and the scale parameter $\beta^{-1}$ follows a uniform distribution in the interval $(1/1.4-\alpha^*,1/1.4+\alpha^*)$, with $\alpha^*=0.1$.  As above, the system fails when the level of  degradation of a process exceeds $L=10$ and the sequence of costs given by Eq. (\ref{seq_cost}) is imposed. To analyse the optimal maintenance strategy, a grid of size 10 in the interval $(0,25)$ has been considered for $T$. Similarly, a grid of size 8 has been considered for $M$ in $(0,10)$. For each combination of $M$ and $T$, 6,000 simulations were performed. Figure \ref{costefig} shows the results of the expected cost rate versus $T$ and $M$. By inspection, the optimal values are obtained for $T_{opt}=6.3333$ and $M_{opt}=4.8571$ with an optimal expected cost rate equals to $37.1962$ monetary units per {\color{black} time unit}. {\color{black} Compared to the model without heterogeneity of the previous subsection,  both objective cost functions show the same pattern. The case without heterogeneity shows a lower optimal expected cost rate ($C(T_{opt}, M_{opt})=35.3005)$ monetary units per time unit than the case with heterogeneity $C(T_{opt}, M_{opt})=37.1962)$ monetary units per time unit (both processes have the same expectation).    }

%{\color{black}
%In the presented model, the values $a$ and $b$ of the uniform distribution $\mathcal{U}(a,b)$ have been chosen to be close to the values set for the model without random effects, so that both models can be compared.  For this reason, parameter $a$ for the uniform distribution has been set to 1. Parameter $b$ of the uniform distribution has been chosen similar to the scale parameter $\beta$.
%}

{\color{black}

The sensitivity analysis of the expected cost rate and the optimal values for $T$ and $M$ is summarized in Tables \ref{sensibilidad6}, \ref{sensibilidad7} and \ref{sensibilidad8}, 
with $\alpha$ varying from $1$ to $1.9$ with increments of $0.15$ units and $\beta^{-1}$ following a uniform distribution in $(b-\alpha^*,b+\alpha^*)$, being $\alpha^*=0.1$ and $b$ varying in the same manner as $\alpha$, from $1$ to $1.9$ also with increments of $0.15$ units. Table \ref{sensibilidad_costes_random} represents the variation in the expected cost rate when the costs for preventive maintenance and corrective maintenance vary, using the values of $T$ and $M$ obtained in the optimization of the model.

As in the model without heterogeneity, the optimal expected cost rate shown in Table \ref{sensibilidad6} also increases as $\alpha$ increases. That is, more deterioration implies higher costs. In this particular example, the random choice of the parameter $\beta$ has the same effect as $\alpha$ on the resulting expected cost rate: it increases when $b$ increases. Similarly, Table \ref{sensibilidad7} shows that the optimal value of $T$ decreases as $\alpha$ increases. An increase in the deterioration implies more frequent inspections. Finally, no trend seems to be found for the optimal values of the preventive threshold $M$ or in the variation of maintenance costs $C_c$ and $C_p$ shown in Tables \ref{sensibilidad8} and \ref{sensibilidad_costes_random}, respectively.

}

%{\color{red}

% As the replacement cost increases or the PM cost decreases, the optimal number of PM activities increases. This implies that it is more economical to perform PM activities than to replace the system. As the repair cost increases, the conditional optimal reliability threshold increases. However, the optimal number of inspections remains fixed.   As the threshold increases, the length of PM cycles shortens, thereby increasing the expected total maintenance cost rate. 

%}

%{\color{black}
%
%Although the model studied introduces inspections, preventive and corrective thresholds, and may seem complex, the stochastic processes describing the deterioration of the system are quite simple. Therefore, it is not difficult to obtain the parameters of these models if we have some data to fit, either by using statistical packages or by traditional methods such as maximum likelihood estimation (MLE).
%Therefore, it is feasible to transport this model to a real case, we only need the parameter estimation of the gamma process and the shot-noise Cox process.
%
%}

%The search of the optimal maintenance policy corresponds to find the pair $(T,M)$ that optimize the function $C(T,M)$ given by (\ref{cost}). That is, to find $T_{opt}$ and $M_{opt}$ that fulfill
%$$C(T_{opt}, M_{opt})=\inf \left\{C(T,M), \ \ M \leq L \right\}$$

\begin{figure}[tbph]
\begin{center}
\includegraphics[width=0.7\textwidth]{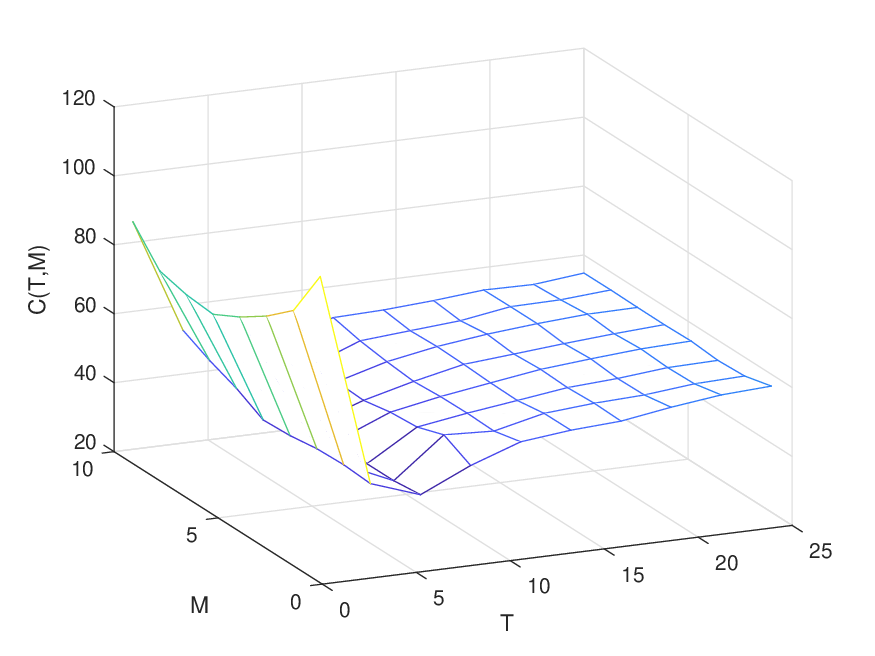}
\caption{{\protect\footnotesize {Expected cost rate versus $T$ and $M$ in a model with random effects.}}}
 \label{costefig}
\end{center}
\end{figure}

\begin{table}[tbph]
%\begin{turn}{90}
%\begin{sidewaystable}
\begin{center}
\begin{tabular}{|c|c|c|c|c|c|c|c|c|c|}
\hline
\backslashbox{$\alpha$}{{\color{black} $b$}} & 1 & 1.15 & 1.3 & 1.45 & 1.6 & 1.75 & 1.9   \\
\hline
1  & \cellcolor{gris0} 26.1538 & \cellcolor{gris0}  28.3350 & \cellcolor{gris1} 31.8173  & \cellcolor{gris1} 32.9500 &\cellcolor{gris2} 38.0842 & \cellcolor{gris3}  44.5595 & \cellcolor{gris3} 48.0884  \\
1.15  & \cellcolor{gris0} 27.2921 & \cellcolor{gris0} 30.9587 & \cellcolor{gris1} 34.4670  & \cellcolor{gris2} 38.8324 & \cellcolor{gris3} 45.0903   & \cellcolor{gris3}  46.3951  & \cellcolor{gris4}  54.5271   \\
%\hline
1.3  & \cellcolor{gris1} 37.6802 & \cellcolor{gris1} 37.1068 &  \cellcolor{gris2} 42.1956 & \cellcolor{gris3} 44.4267 &  \cellcolor{gris4} 49.5072 & \cellcolor{gris5}  56.1529  & \cellcolor{gris5}  58.2646  \\
%\hline
1.45  & \cellcolor{gris1} 32.5948 & \cellcolor{gris2}  39.7887 & \cellcolor{gris3} 44.3998 & \cellcolor{gris4} 49.9204  & \cellcolor{gris4} 53.8380  &  \cellcolor{gris5} 56.2251  & \cellcolor{gris6} 61.2970  \\
%\hline
1.6  & \cellcolor{gris1}  37.4109 & \cellcolor{gris2} 42.5415 &  \cellcolor{gris3} 48.5337  & \cellcolor{gris4} 53.8215  & \cellcolor{gris5}  55.9734 & \cellcolor{gris5} 60.1671  & \cellcolor{gris6} 67.9211   \\
%\hline
1.75 & \cellcolor{gris2} 39.4109 & \cellcolor{gris3} 46.8386 & \cellcolor{gris5} 56.7378  & \cellcolor{gris5} 58.7266 & \cellcolor{gris5} 60.9805  & \cellcolor{gris6} 61.7760  & \cellcolor{gris6} 64.8222    \\
%\hline
1.9 & \cellcolor{gris3} 46.9104 & \cellcolor{gris4} 51.9782 & \cellcolor{gris5} 57.3185 & \cellcolor{gris5} 60.1173 &  \cellcolor{gris6} 63.9612  & \cellcolor{gris6} 66.7328  & \cellcolor{gris6} 66.8821  \\
\hline
\end{tabular}
\caption{Sensitivity analysis on the values of  $\alpha$ and $b$ for the optimal expected cost rate in a model with random effects.} \label{sensibilidad6}
\end{center}
%\end{sidewaystable}
%\end{turn}
\end{table}

%\end{landscape}

\begin{table}[tbph]
\begin{center}
\begin{tabular}{|c|c|c|c|c|c|c|c|}
\hline
\backslashbox{$\alpha$}{{\color{black} $b$}} & 1 & 1.15 & 1.3 & 1.45 & 1.6 & 1.75 & 1.9  \\
\hline
1  & \cellcolor{gris2} 8.3615 & \cellcolor{gris2}  9.3511 & \cellcolor{gris3} 10.8538 & \cellcolor{gris3} 11.6696 &\cellcolor{gris5} 13.6147 & \cellcolor{gris6}  13.8070 & \cellcolor{gris6} 14.9346 \\
%\hline
1.15  & \cellcolor{gris1} 7.4953 & \cellcolor{gris2} 8.3954 &  \cellcolor{gris2} 9.3891 & \cellcolor{gris3} 10.4470 &  \cellcolor{gris4} 11.4916 & \cellcolor{gris5} 12.3747  & \cellcolor{gris6}  13.6643 \\
%\hline
1.3  & \cellcolor{gris0} 6.7435 & \cellcolor{gris1}  8.0170 & \cellcolor{gris2} 8.8949 & \cellcolor{gris2} 9.2960  & \cellcolor{gris3} 10.3069  &  \cellcolor{gris4} 11.4964  & \cellcolor{gris4} 12.1105 \\
%\hline
1.45  & \cellcolor{gris0}  6.5243 & \cellcolor{gris1} 6.9606 &  \cellcolor{gris1} 7.9728 & \cellcolor{gris2} 9.0949 & \cellcolor{gris2}  9.4303 & \cellcolor{gris3}  10.1668  & \cellcolor{gris4} 11.6343 \\
%\hline
1.6  & \cellcolor{gris0} 5.8121  & \cellcolor{gris0} 6.7253 & \cellcolor{gris1} 8.0823  & \cellcolor{gris2} 8.6948 & \cellcolor{gris2} 9.0396   & \cellcolor{gris2} 9.4426  & \cellcolor{gris3} 10.3791 \\
1.75 & \cellcolor{gris0} 5.7501 & \cellcolor{gris0} 6.0871 & \cellcolor{gris1} 6.9204  & \cellcolor{gris1} 7.4151 & \cellcolor{gris1} 8.1554  & \cellcolor{gris2} 8.7310 & \cellcolor{gris3} 9.5734 \\
%\hline
1.9 & \cellcolor{gris0}  5.4413 & \cellcolor{gris0} 5.8472 & \cellcolor{gris0} 6.4068 & \cellcolor{gris1} 6.8895  & \cellcolor{gris1} 8.0662 & \cellcolor{gris1}  8.1205 & \cellcolor{gris2} 8.8067 \\
\hline
\end{tabular}
\caption{Sensitivity analysis on the values of $\alpha$ and $b$ for the optimal value of $T$ in a model with random effects.} \label{sensibilidad7}
\end{center}
\end{table}

\begin{table}[tbph]
\begin{center}
\begin{tabular}{|c|c|c|c|c|c|c|c|}
\hline
\backslashbox{$\alpha$}{{\color{black} $b$}} & 1 & 1.15 & 1.3 & 1.45 & 1.6 & 1.75 & 1.9  \\
\hline
1  & \cellcolor{gris2} 2.7979 & \cellcolor{gris0}  1.1639 & \cellcolor{gris2} 2.2698  & \cellcolor{gris4} 3.6165 &\cellcolor{gris0} 1.6126 & \cellcolor{gris4}  3.4845 & \cellcolor{gris3} 3.3828 \\
1.15  & \cellcolor{gris0} 1.3415 & \cellcolor{gris6} 5.5418 & \cellcolor{gris1} 1.6695  & \cellcolor{gris3} 3.1643 & \cellcolor{gris1} 1.8576   & \cellcolor{gris2} 2.7601 & \cellcolor{gris4}  3.7015 \\
%\hline
1.3  & \cellcolor{gris0} 1.4256 & \cellcolor{gris0} 1.1350 &  \cellcolor{gris1} 1.6613 & \cellcolor{gris3} 3.2311 &  \cellcolor{gris3} 3.1447  & \cellcolor{gris1}  1.9884 & \cellcolor{gris2}  2.3213 \\
%\hline
1.45  & \cellcolor{gris3} 2.9804 & \cellcolor{gris1}  1.7815 & \cellcolor{gris0} 1.5055 & \cellcolor{gris0} 1.4391  & \cellcolor{gris2} 2.4255  &  \cellcolor{gris0} 1.5748  & \cellcolor{gris6} 5.2194 \\
%\hline
1.6  & \cellcolor{gris1}  1.7922 & \cellcolor{gris2} 2.3719 &  \cellcolor{gris4} 3.4496 & \cellcolor{gris2} 2.5720  & \cellcolor{gris3}  3.2729 & \cellcolor{gris4} 3.6388  & \cellcolor{gris1} 1.6719 \\
%\hline
1.75 & \cellcolor{gris4} 3.5678 & \cellcolor{gris2} 2.5439 & \cellcolor{gris4} 3.6870 & \cellcolor{gris0} 1.2555 & \cellcolor{gris3} 3.2687  & \cellcolor{gris1} 2.2337  & \cellcolor{gris3} 3.3523 \\
%\hline
1.9 & \cellcolor{gris0}  1.4376 & \cellcolor{gris3} 3.1995 & \cellcolor{gris0} 1.1186 & \cellcolor{gris1} 2.0668  &  \cellcolor{gris0} 1.3907 & \cellcolor{gris5}  4.2251 & \cellcolor{gris2} 2.5016 \\
\hline
\end{tabular}
\caption{Sensitivity analysis on the values of  $\alpha$ and $b$ for the optimal value of $M$ in a model with random effects.} \label{sensibilidad8}
\end{center}
\end{table}

\begin{table}[tbph]
\begin{center}
\begin{tabular}{|c|c|c|c|c|c|c|c|c|c|}
\hline
\backslashbox{$C_c$}{$C_p$} & 90 & 92.5 &  95 & 97.5 & 100 & 102.5 & 105 & 107.5 & 110 \\
\hline
190  & \cellcolor{gris0} 31.15 & \cellcolor{gris2}  36.48 & \cellcolor{gris1} 33.77 & \cellcolor{gris2} 36.77 &\cellcolor{gris1} 33.09 & \cellcolor{gris1}  33.18 & \cellcolor{gris4} 43.31 & \cellcolor{gris3} 39.84 & \cellcolor{gris1}  33.40 \\
192.5  & \cellcolor{gris2} 37.24 & \cellcolor{gris1} 35.31 & \cellcolor{gris0} 32.65 & \cellcolor{gris1} 33.58 & \cellcolor{gris2} 37.47   & \cellcolor{gris1}  33.56  & \cellcolor{gris1} 34.54 & \cellcolor{gris0}  32.91 & \cellcolor{gris0}  31.88 \\
%\hline
195  & \cellcolor{gris0} 32.96 & \cellcolor{gris1} 33.19 &  \cellcolor{gris1} 34.71 & \cellcolor{gris0} 31.13 &  \cellcolor{gris0} 32.93  & \cellcolor{gris0}  31.62  & \cellcolor{gris1}  34.08 & \cellcolor{gris1} 34.78 &  \cellcolor{gris1} 33.02 \\
%\hline
197.5  & \cellcolor{gris1} 33.65 & \cellcolor{gris4}  45.19 & \cellcolor{gris1} 34.26 & \cellcolor{gris2} 36.21  & \cellcolor{gris1} 34.43  &  \cellcolor{gris4} 43.83  & \cellcolor{gris2} 36.55 & \cellcolor{gris4} 42.48 & \cellcolor{gris1}  34.34 \\
%\hline
200 & \cellcolor{gris1}  33.51 & \cellcolor{gris4} 43.01 &  \cellcolor{gris3} 39.67 & \cellcolor{gris0} 31.93 & \cellcolor{gris3}  39.48 & \cellcolor{gris2}  36.13  & \cellcolor{gris1} 34.60 & \cellcolor{gris1} 33.81 & \cellcolor{gris2}  37.08 \\
%\hline
202.5 & \cellcolor{gris1} 34.02 & \cellcolor{gris2} 36.24 & \cellcolor{gris1} 33.92  & \cellcolor{gris1} 35.40 & \cellcolor{gris2} 36.48  & \cellcolor{gris1} 33.11  & \cellcolor{gris1} 35.94 & \cellcolor{gris1} 34.56  &  \cellcolor{gris1}  34.61 \\
%\hline
205 & \cellcolor{gris1}  35.99 & \cellcolor{gris1} 33.67 & \cellcolor{gris4} 44.57 & \cellcolor{gris2} 38.59  &  \cellcolor{gris1} 35.89 & \cellcolor{gris3}  39.22 & \cellcolor{gris1} 33.87 &  \cellcolor{gris3} 41.78 & \cellcolor{gris2}  38.03 \\
207.5 & \cellcolor{gris1}  34.59 & \cellcolor{gris0} 30.98 & \cellcolor{gris1} 33.59 & \cellcolor{gris1} 35.03  &  \cellcolor{gris2} 36.18 & \cellcolor{gris4}  43.77 & \cellcolor{gris6} 50.11 &  \cellcolor{gris2} 36.11 & \cellcolor{gris1}  33.27 \\
210 & \cellcolor{gris3}  39.02 & \cellcolor{gris1} 34.31 & \cellcolor{gris2} 37.23 & \cellcolor{gris1} 34.18  &  \cellcolor{gris1} 34.70 & \cellcolor{gris1}  35.04 & \cellcolor{gris2} 36.52 & \cellcolor{gris3} 39.06  & \cellcolor{gris3}  39.01 \\
\hline
\end{tabular}
\caption{Sensitivity analysis on the values of $C_c$ and $C_p$ for the optimal expected cost rate in a model with random effects.} \label{sensibilidad_costes_random}
\end{center}
\end{table}

\section{Conclusions, future extensions and limitations of this work}

A {\color{black} system subject to multiple degradation processes is analyzed in this paper. Degradation processes start according to a shot noise Cox process and grow according to homogeneous gamma processes Under this framework, the combined model of initiation and growth is modelled as a Cox process. Using properties of the Cox processes, the distribution to the system lifetime is obtained.} It is shown that, in absence of maintenance, the system lifetime {\color{black} distribution} is increasing failure rate hence a preventive maintenance policy is worth implementing to
improve the system reliability. The analysis is also completed with degradation process-specific heterogeneity considering a random effects model {\color{black} using a uniform distribution.} A classical maintenance strategy is implemented for this system. As before, the probability expressions of the objective cost function are closed forms that allow intuitive interpretations. It is due to the mathematical tractability of the shot noise Cox process induced by a Poisson process. 

Although in this paper starting points of the degradation processes follow a shot noise Cox process, the result can be extended considering a different Cox process such as a Weibull renewal process. Or, even, a non-Cox distribution such as a Hawkes process. Another crucial assumption in this paper is that the degradation processes evolve independently and according to the same degradation pattern. For future research, different degradation patters for the degradation processes and dependence between the processes could be considered.

{\color{black} With respect to the limitations of the paper, the most important limitation refers to the lack of real data that support the proposed maintenance model.}

\section*{Acknowledgements} This research was supported by Ministerio de Ciencia e Innovación, Spain (Project PGC2018-094964-B-I00). 

\section*{References}

\end{document}